\documentclass[oneside,reqno,11pt]{amsart}
\usepackage[left=1in,top=1in,right=1in,bottom=1in]{geometry}
\usepackage{amsmath,amsthm,amsfonts,amssymb}

\newtheorem{theorem}{Theorem}[section]
\newtheorem{lemma}[theorem]{Lemma}

\newtheorem{definition}[theorem]{Definition}
\newtheorem{corollary}[theorem]{Corollary}

\newtheorem{remark}[theorem]{Remark}

\newcommand{\fg}{\mathfrak{g}}
\newcommand\RR{{\mathbb{R}}}

\newcommand\NN{\mathbb{ N}}
\newcommand\ZZ{\mathbb{ Z}}

\newcommand\HH{\mathbb{ H}}

\newcommand{\cdual}{*}
\newcommand{\dup}[2]{\langle #1,#2\rangle}
\newcommand{\Co}{\mathrm{Co}}
\newcommand{\supp}{\mathrm{supp}}
\newcommand{\Mid}{\Big|}

%%%% Schwartz spaces
\newcommand{\cS}{\mathcal{S}}
\newcommand{\cSg}{\mathcal{S}(\fg)}
\newcommand{\cSG}{\mathcal{S}(G)}
\newcommand{\cSZ}{\mathcal{S}_0 (G)}
\newcommand{\cSd}{\mathcal{S}^*(G)}
\newcommand{\cSZd}{\mathcal{S}_0^*(G)}
\newcommand{\cP}{\mathcal{P}(G)}

%%% suplaplacian
\newcommand{\sL}{\mathcal{L}}

%%% Functions

\begin{document}
\title{Coorbit description and atomic decomposition of Besov spaces }
\author{Jens G. Christensen}
\address{Tufts University, Department of Mathematics, 
503 Boston Avenue, Medford, MA 02155}
\thanks{The research of J. G. Christensen was partially supported by
NSF grant DMS-0801010, and ONR grants NAVY.N0001409103, NAVY.N000140910324}
\email{jens.christensen@tufts.edu}

\author{Azita Mayeli}
\address{Department of Mathematics and Computer Sciences, City University of New York (CUNY), Queensborough College,
222-05 56th Avenue Bayside, NY 11364}\thanks{The research of A. Mayeli was partially supported by
NSF grant DMS-0801010}
\email{amayeli@qcc.cuny.edu}

\author{Gestur {\'O}lafsson}
\address{Department of Mathematics, Louisiana State University, Baton Rouge, LA 70803}
\email{olafsson@math.lsu.edu}
%\thanks{The research of G. {\'O}lafsson was supported by NSF grant DMS-0801010}
\thanks{The research of G. {\'O}lafsson was supported by DMS-0801010 and DMS-1101337} 
\keywords{Coorbit spaces; Sampling; Stratified Lie group; Besov spaces; Sub-Laplacian;}

\begin{abstract} Function spaces are central topic in analysis. Often those spaces and related analysis involves symmetries in form of an action of a Lie group. Coorbit theory as introduced by Feichtinger and Gr\"ochenig and then later extended in \cite{Christensen2011} gives a
unified method to construct Banach spaces of functions based on
representations of Lie groups. In this article we identify the
homogeneous Besov spaces  on stratified Lie groups introduced in
\cite{FuMa} as coorbit spaces in the sense of \cite{Christensen2011}
and use this to derive atomic decompositions for the Besov spaces.
   \end{abstract}
\maketitle
\section*{Introduction}
\noindent
Banach spaces of functions are central topics in analysis on $\RR^n$. Those spaces are usually translation invariant, a property that can be expressed as invariance under the Lie group $\RR^n$ acting on itself. More generally, one considers Banach spaces defined on a manifold $X$ and invariant under an action of a Lie group $G$. This simple idea combines two central topics in mathematics: Harmonic analysis on manifolds and representation theory of Lie groups. This idea was exemplified in the fundamental
construction of Feichtinger and Gr\"ochenig  \cite{Feichtinger1988,Feichtinger1989a,Feichtinger1989b} where the authors proposed 
a unified way to construct function spaces based on representation theory. This method was generalized in \cite{Christensen2009} by removing
some assumptions on the representation 
from the work of Feichtinger and Gr\"ochenig, and
a method for atomic decompositions of these spaces was developed in
\cite{Christensen2010}. Similar techniques for decomposition
can be found in for example \cite{Grochenig1991} and \cite{Fornasier2005}, 
but the use of differentiable representations in \cite{Christensen2010} 
makes those results particularly convenient for our purposes.

A different way to introduce function spaces is to use smoothness
conditions. As 
important examples of that are the well known
  Besov and and Triebel-Lizorkin-type spaces. Often those 
spaces are described using Littlewood-Paley methods \cite{FJW}, but they
can also be described using wavelet type decompositions, 
see \cite{HL} and the reference therein. Stratified Lie groups are
natural objects for extending this line of study, since they 
come with natural dilations and a sub-Laplacian satisfying 
the H\"ormander conditions.
 
 The inhomogeneous  Besov spaces on stratified Lie groups was introduced by Saka \cite{Saka79}. A characterization of inhomogeneous Besov groups in terms of Littlewood-Paley-decomposition was shown for all groups of polynomial growth in \cite{FMV}. The construction of the homogeneous Besov spaces was extended to stratified 
Lie groups in \cite{FuMa}
  using a discrete collection
 of smooth band-limited wavelets  with vanishing moments 
of all orders.  These wavelets were introduced in \cite{gm1}. 
In \cite{FuMa}, it was also shown that the definition is equivalent with a definition in terms of the heat semigroup
$\{e^{-t\sL}\}_{t>0}$ for $\sL$, the sub-Laplacian operator for the group $G$.
In this paper we extend this result and  show that the definition of Besov spaces  also coincides with a definition based on 
 a family of
operators $\{\widehat\varphi(t\sL)\}_{t>0}$ where $\widehat\varphi\in
C_c^\infty(\RR^+)$ (Theorem \ref{norm-equi-general}).
  We further use a representation theory approach  
  to show that the homogeneous 
Besov spaces on stratified Lie groups in \cite{FuMa} are coorbit spaces in the sense of
\cite{Christensen2009,Christensen2011} and to use the results 
of \cite{Christensen2010} to obtain atomic decompositions and frames
for those spaces.

The article is organized as follows. In Section \ref{Se-Coorbit} we recall the definition of coorbit spaces and their atomic decomposition as needed for the theory of Besov spaces. The details and proofs can be found in \cite{Christensen2009,Christensen2010,Christensen2011}. We start Section \ref{besov-spaces} by recalling the basic definitions for stratified Lie groups. In particular we introduce the sub-Laplacian and
its spectral theory. This enables us to formulate the needed results from \cite{FuMa}. In particular we recall in Theorem
\ref{equivalency-with-heat-kernel} some properties of the homogeneous Besov spaces  and their descriptions in terms of the heat kernel:
For any
   $k\in \NN$, $|s|<2k$,  and any $f\in \cSZd$
   \begin{align*}
     %\label{asympto_rel} 
     \| f \|_{{\dot B}_{p,q}^s} \asymp
     \left( \int_0^\infty t^{-sq/2 } \|(t\sL)^ke^{-t\sL}f \|_p^q \frac{dt}{t}
     \right)^{1/q}
   \end{align*}
where $\mathcal{L}$ is the sub-Laplacian (\cite{FuMa}, Theorem 4.4).
This description is not well adapted to the coorbit theory. For that
we need to replace the heat kernel   by
a smooth bandlimited function $u$. This is achieved in 
Theorem \ref{norm-equi-general}, which is the fundamental step in describing the
Besov spaces as coorbit spaces and their atomic decomposition in Theorem \ref{analyze vector}: 
  Let $1\leq p,q\leq \infty$ and $s\in\RR$. Let
  $\widehat u\in \cS(\RR^+)$ be compactly
  supported and satisfying some extra conditions. If $u$
  denotes the distribution kernel of $\widehat u(\sL)$,
  then $u$ is an analyzing  vector and up to norm equivalence
  $$B_{p,q}^{Q-2s/q}(G)=\mathrm{Co}_{\cSZ}^u L^{p,q}_s\, .$$
Here $Q$ is the homogeneous degree of the stratified Lie group $G$.

\section{Coorbit spaces and their atomic decomposition}\label{Se-Coorbit}
\noindent
In this section we introduce the notion of coorbit spaces based on \cite{Christensen2009,Christensen2011}.
We then discuss the
discretization from \cite{Christensen2010}
of reproducing kernel Banach spaces and apply it to coorbit spaces.

\subsection{Construction}
The starting point is a continuous representation $\pi$ of
a locally compact group $G$ on a Fr\'echet space $S$. In most applications
the space $S$ has a natural candidate.
Let $S^\cdual$ be the
conjugate linear dual equipped with the weak* topology and
assume that $S$ is continuously embedded and weak*
dense in $S^\cdual$. The conjugate linear
dual pairing of elements $v\in S$ and $\phi\in S^\cdual$ will be denoted
by $\dup{\phi}{v}$.
In particular
$\dup{u}{v}\in \mathbb{C}$ is well defined for $u,v\in S$ and if
$\dup{u}{v}=0$ for all $v\in S$ then $u=0$.

A vector $v\in S$ is called \emph{cyclic} if
$\dup{\phi}{\pi(x)v}=0$ for all $x\in G$ means that
$\phi=0$ in $S^*$, and if such a vector
exists $\pi$ is called a \emph{cyclic representation}.
Define the contragradient
representation $(\pi^\cdual,S^\cdual)$ by
\begin{equation*}
  \dup{\pi^\cdual(x)\phi}{v}:=\dup{\phi}{\pi(x^{-1})v}.
\end{equation*}
Then $\pi^*$ is a continuous representation of $G$ on $S^\cdual$.

For a fixed vector
$u\in S$ the \emph{wavelet transform} $W_u:S^*\to C(G)$
\begin{equation*}
  W_u(\phi)(x) := \dup{\phi}{\pi(x)u} = \dup{\pi^*(x^{-1})\phi}{u}.
\end{equation*}
is a linear mapping. Note that $W_u$ is injective if and only if $u$ is cyclic.

Denote by $\ell_x$ and $r_x$ the left and right
translations on functions on $G$ given by
\begin{equation*}
  \ell_x f(y) := f(x^{-1}y)
  \qquad\text{and}\qquad
  r_xf(y) := f(yx).
\end{equation*}
A Banach space of functions $B$ is called left
invariant if $f\in B$ implies that $\ell_x f\in B$ for
all $x\in G$ and there is a constant
$C_x$ such that $\| \ell_x f\|_B\leq C_x \| f\|_B$
for all $f\in B$. Define right invariance similarly.
We will always assume that for each non-empty compact set $U$ there is a
constant $C_U$ such that for all $f\in B$
\begin{equation*}
%  \label{eq:3}
  \sup_{y\in U} \| \ell_yf \|_B \leq C_U \| f\|_B
  \qquad\text{and}\qquad
  \sup_{y\in U} \| r_yf \|_B \leq C_U \| f\|_B.
\end{equation*}
If $x\mapsto \ell_xf$ or $x\mapsto r_xf$ are continuous from $G$ to $B$
for all $f\in B$, then we say that left or
right translation are continuous respectively.

Fix a Haar measure $\mu$ on $G$. We will only work with spaces $B$ of functions on $G$
for which convergence in $B$ implies convergence (locally) in left
Haar measure on $G$. Examples of such spaces are all spaces continuously included in some $L^p(G)$ for $1\leq p \leq \infty$.
When $f,g$ are measurable functions on $G$ for which
the product $f(x)g(x^{-1}y)$ is integrable for all $y\in G$
we define the convolution $f*g$ as
\begin{equation*}%\label{eq:6}
  f*g(y) := \int_G f(x)g(x^{-1}y)\,d\mu(x).
\end{equation*}
For a function $f$ on $G$ we define ${f}^\vee(x) := f(x^{-1})$ and
$f^* := \overline{{f}^\vee}$. We will frequently use that for
$f \in {\rm L}^1(G)$, the adjoint of the convolution
operator $g \mapsto g \ast f$ is provided by $h \mapsto h \ast f^*$.

For a given representation $(\pi,S)$ and a vector $u\in S$
define the space
$$B_u := \{ f\in B\, |\, f=f*W_u(u) \}$$
with norm inherited from $B$.
Furthermore define the space
$$\Co_S^u B := \{ \phi\in S^*\, |\, W_u(\phi)\in B  \}$$
with norm $\| \phi \| = \| W_u(\phi)\|_B$.
In \cite{Christensen2011} minimal conditions were listed to ensure
that a $B_u$ and $\Co_S^u B$ are isometrically
isomorphic Banach spaces. In that case, the space $\Co_S^u B$ is called
the \emph{coorbit space of $B$
with respect to $u$ and $(\pi,S)$}.
In this paper we
only need the following result which can be
found in \cite{Christensen2011}.

\begin{theorem}\label{thm:coorbitsduality}
Let $\pi$ be a cyclic representation of a group $G$ on a
Fr\'echet space $S$ which is continuously included in
its conjugate dual $S^*$. Fix a cyclic vector $u\in S$
and assume that
$W_u(\phi )\ast W_u(u)=W_u(\phi )$ for all $\phi \in S^*$.
If for the Banach function space $B$ the mapping %for a fixed $u\in S$
\begin{equation}\label{eq-doubleCont}
B\times S\to \mathbb{C}\, , \quad (f,v)\mapsto \int_G f(x) W_v(u)^\vee (x)\, d\mu (x)
\end{equation}
is well defined and continuous, then
\begin{enumerate}
  \item The space $\Co_S^u B$ is a $\pi^\cdual$-invariant Banach
    space.     \label{prop2}
  \item The space $B_u$ is a left invariant reproducing kernel
    Banach subspace of $B$.
  \item $W_u:\Co_S^u B\to B_u$ is an isometric isomorphism which
    intertwines $\pi^\cdual$ and left translation. \label{prop3}
  \item If left translation is continuous on $B,$ then
    $\pi^*$ acts continuously on $\Co_S^u B$.\label{prop4}
  \item $\Co_S^u B = \{ \pi^\cdual(f)u \mid f\in
    B_u\}$. \label{prop5}
\end{enumerate}
\end{theorem}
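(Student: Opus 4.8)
The plan is to establish that the wavelet transform $W_u$ and the map $f\mapsto\pi^\cdual(f)u$ are mutually inverse bijections between $\Co_S^u B$ and $B_u$, and then to read off the five assertions from this together with the left invariance of $B$. Two preliminary observations carry most of the weight. First, the definition of the contragredient representation gives, for $\phi\in S^\cdual$ and $x,y\in G$,
\begin{equation*}
W_u(\pi^\cdual(x)\phi)(y)=\dup{\pi^\cdual(x)\phi}{\pi(y)u}=\dup{\phi}{\pi(x^{-1}y)u}=\ell_x W_u(\phi)(y),
\end{equation*}
so $W_u$ intertwines $\pi^\cdual$ with left translation, and continuity of $\pi$ together with continuity of the functionals in $S^\cdual$ shows $W_u(\phi)\in C(G)$. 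Second, since $W_v(u)^\vee(x)=\dup{u}{\pi(x^{-1})v}=\dup{\pi^\cdual(x)u}{v}$, hypothesis \eqref{eq-doubleCont} says exactly that for each $f\in B$ the assignment $v\mapsto\int_G f(x)\dup{\pi^\cdual(x)u}{v}\,d\mu(x)$ is a continuous conjugate-linear functional on $S$; it therefore defines an element of $S^\cdual$, namely the weak integral $\pi^\cdual(f)u=\int_G f(x)\pi^\cdual(x)u\,d\mu(x)$, and \eqref{eq-doubleCont} moreover makes $f\mapsto\pi^\cdual(f)u$ continuous from $B$ into $S^\cdual$ with its weak* topology.

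The core of the proof is the pair of identities
\begin{align*}
W_u(\pi^\cdual(f)u)=f\ast W_u(u)\quad(f\in B),\qquad \pi^\cdual(W_u(\phi))u=\phi\quad(\phi\in S^\cdual).
\end{align*}
For the first, unravel the weak integral and use $\dup{\pi^\cdual(x)u}{\pi(y)u}=\dup{u}{\pi(x^{-1}y)u}=W_u(u)(x^{-1}y)$ to get
\begin{equation*}
W_u(\pi^\cdual(f)u)(y)=\int_G f(x)\dup{\pi^\cdual(x)u}{\pi(y)u}\,d\mu(x)=\int_G f(x)W_u(u)(x^{-1}y)\,d\mu(x)=(f\ast W_u(u))(y),
\end{equation*}
which in passing shows that $f\ast W_u(u)$ is well defined on all of $B$. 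For the second, apply the first identity to $f=W_u(\phi)$ and use the reproducing hypothesis $W_u(\phi)\ast W_u(u)=W_u(\phi)$ to obtain $W_u(\pi^\cdual(W_u(\phi))u)=W_u(\phi)$, then cancel $W_u$, which is injective because $u$ is cyclic. Now if $f\in B_u$ then $W_u(\pi^\cdual(f)u)=f\ast W_u(u)=f$, while if $\phi\in\Co_S^u B$ then $W_u(\phi)=W_u(\phi)\ast W_u(u)$ lies in $B$ and satisfies the reproducing identity, so $W_u(\phi)\in B_u$. Hence $W_u\colon\Co_S^u B\to B_u$ and $\pi^\cdual(\cdot)u\colon B_u\to\Co_S^u B$ are inverse to one another; $W_u$ is isometric by the very definition of the norm on $\Co_S^u B$, and it intertwines $\pi^\cdual$ with $\ell$ by the first paragraph. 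Combined with left invariance of $B$, this already yields assertion (3), assertion (5), and the fact that $\Co_S^u B$ is $\pi^\cdual$-invariant with $\|\pi^\cdual(x)\phi\|\le C_x\|\phi\|$.

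It remains to see that $B_u$ is a closed, left-invariant, reproducing-kernel subspace of $B$, which gives (1), (2) and (4). Left invariance: if $f=f\ast W_u(u)$, then $\ell_x f=(\ell_x f)\ast W_u(u)$ because left translation commutes with right convolution, and $\ell_x f\in B$ with the required norm bound by left invariance of $B$. Closedness: if $f_n\in B_u$ and $f_n\to f$ in $B$, then $\pi^\cdual(f_n)u\to\pi^\cdual(f)u$ in $(S^\cdual,\mathrm{weak}^*)$ by the continuity noted above, hence $f_n=W_u(\pi^\cdual(f_n)u)\to W_u(\pi^\cdual(f)u)$ pointwise on $G$; since norm convergence in $B$ forces convergence locally in Haar measure, $f=W_u(\pi^\cdual(f)u)$, and therefore $f\ast W_u(u)=f$, i.e.\ $f\in B_u$. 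Thus $B_u$ is a Banach space in the inherited norm, and $\Co_S^u B$ is a Banach space because it is isometric to $B_u$. Moreover $B_u$ is a reproducing-kernel space: for $f\in B_u$ the point evaluation $f(x)=(f\ast W_u(u))(x)=\dup{\pi^\cdual(f)u}{\pi(x)u}$ depends continuously on $f\in B$ by \eqref{eq-doubleCont} with $v=\pi(x)u$, and the identity $f=f\ast W_u(u)$ exhibits $W_u(u)$ as the reproducing kernel. Finally, if left translation is continuous on $B$ it is continuous on the closed invariant subspace $B_u$, and transporting this through the isometric intertwining isomorphism $W_u$ makes $\pi^\cdual$ act continuously on $\Co_S^u B$, which is (4).

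I expect the only genuinely delicate points to be the ones the hypotheses are tailored to absorb: interpreting $\pi^\cdual(f)u$ as an honest element of $S^\cdual$ and justifying the interchange of the dual pairing with the integral in the first core identity is precisely what the joint well-definedness and continuity in \eqref{eq-doubleCont} provide, while the closedness step uses in an essential way both the weak* continuity of $f\mapsto\pi^\cdual(f)u$ and the standing hypothesis that norm convergence in $B$ implies local convergence in Haar measure. Everything else is formal manipulation of the intertwining relation and of the reproducing identity $W_u(\phi)\ast W_u(u)=W_u(\phi)$.
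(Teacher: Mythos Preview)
The paper does not actually prove this theorem: it is stated as a result that ``can be found in \cite{Christensen2011}'' and no argument is given in the text. There is therefore no in-paper proof to compare your proposal against.

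That said, your argument is the standard one and is essentially correct. The key steps --- interpreting $\pi^\cdual(f)u$ as a weak$^*$ integral via hypothesis \eqref{eq-doubleCont}, the identity $W_u(\pi^\cdual(f)u)=f\ast W_u(u)$, the intertwining $W_u\circ\pi^\cdual(x)=\ell_x\circ W_u$, and the closedness argument using that $B$-convergence implies local convergence in measure --- are exactly the ingredients used in \cite{Christensen2011}. One small overstatement: you write the identity $\pi^\cdual(W_u(\phi))u=\phi$ ``for all $\phi\in S^\cdual$'', but your derivation uses the first identity with $f=W_u(\phi)$, which is only available once $W_u(\phi)\in B$; in particular $\pi^\cdual(W_u(\phi))u$ need not even be defined for arbitrary $\phi\in S^\cdual$. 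Since you only ever invoke this for $\phi\in\Co_S^u B$, where $W_u(\phi)\in B$ by definition, the logic is unaffected --- just tighten the quantifier.
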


A vector $u$ satisfying the requirements of the theorem
is called an \emph{analyzing vector}.

\begin{remark} Note that the condition (\ref{eq-doubleCont}) implies,
in particular, that for a fixed $u\in S$ we have
$\pi^*(f)\in S^*$  for all $f\in B$. Also, if $S$ is dense in a Hilbert space $H$ and $\pi$ extends to an unitary
representation, also denoted by $\pi$, of $G$ on $H$ such that for $u,v\in S$ (or $H$) $\dup{u}{v}=(u,v)$, then (\ref{eq-doubleCont})
says that
$$(f,v)\mapsto (f,W_u(v))$$
is continuous, where the $(\cdot,\cdot )$ on the right refer to the K\"othe
dual pairing on $B$.
\end{remark}

\subsection{Atomic decompositions and frames}

The previous section sets up a correspondence between
a space of distributions and a reproducing kernel
Banach space. We now investigate the discretization
operators introduced in \cite{Feichtinger1989a}
and \cite{Grochenig1991}, but we do so without assuming integrability
of the reproducing kernel.
The reproducing kernel Banach space $B_u$ has
reproducing kernel $W_u(u)(y^{-1}x)$ and
is continuously included in an ambient Banach space $B$.
 From now on we will 
assume that the space $B$ is solid, which means that
if $g\in B$ and $|f (x) | \leq |g(x)|$ $\mu$-almost everywhere then $f \in B$
and $\| f\|_B \leq \| g\|_B$.

Frames and atomic decompositions for Banach spaces were first introduced
in \cite{Grochenig1991} in the following manner.

\begin{definition}
  \label{def:1}
  Let $B$ be a Banach space and $B^\#$ a Banach sequence
  space with index set $I$. If for $\lambda_i\in B^*$ and $\phi_i \in B$
  we have
  \begin{enumerate}
  \item $\{\lambda_i(f) \}_{i\in I} \in B^\#$ for all $f\in B$;
  \item the norms
    $\| \lambda_i(f)\|_{B^\#}$ and $\| f\|_B$ are equivalent;
  \item $f$ can be written as $f = \sum_{i\in I} \lambda_i(f) \phi_i$;
  \end{enumerate}
  then $\{(\lambda_i,\phi_i) \}_{i\in I}$ is an atomic decomposition
  of $B$ with respect to $B^\#$.
\end{definition}

More generally a Banach frame for a Banach space can be defined as:
\begin{definition}
  \label{def:3}
  Let $B$ be a Banach space and $B^\#$ a Banach sequence
  space with index set $I$. If for $\lambda_i\in B^*$
  we have
  \begin{enumerate}
  \item $\{\lambda_i(f) \}_{i\in I} \in B^\#$ for all $f\in B$,
  \item the norms
    $\| \lambda_i(f)\|_{B^\#}$ and $\| f\|_B$ are equivalent,
  \item there is a bounded reconstruction
    operator $T:B^\# \to B$
    such that $T(\{\lambda_i(f) \}_{i\in I}) = f$,
  \end{enumerate}
  then $\{ \lambda_i \}_{i\in I}$ is an Banach frame for
  $B$ with respect to $B^\#$.
\end{definition}
In the sequel we will often suppress the use of the index set $I$.

The Banach sequence spaces we will use were introduced in
\cite{Feichtinger1989a} and they are constructed from
a solid Banach function space $B$ as described below.
For a compact neighbourhood $U$ of the identity we call
the sequence $\{ x_i\}_{i\in I}$ $U$-relatively separated
if $G\subseteq \bigcup_i x_iU$ and there exists $N\in \mathbb{N}$ such that
\begin{equation*}
  \sup_i (\# \{ j \mid x_iU\cap x_jU \neq \emptyset \}) \leq N.
\end{equation*}
For a $U$-relatively separated sequence $X=\{ x_i\}_{i\in I}$
define the space $B^\#(X)$ to be the collection
of sequences $\{ \lambda_i\}_{i\in I}$ for which
the pointwise sum
\begin{equation*}
  \sum_{i\in I} |\lambda_i| 1_{x_iU}(x)
\end{equation*}
defines a function in $B$.
If the compactly supported continuous functions are dense in $B$ then
this sum also converges in norm (for an argument see \cite{Rauhut2005}
Lemma 4.3.1).
Equipped with the norm
$$\| \{\lambda_i \}\|_{B^\#} := \|
\sum_{i\in I} |\lambda_i| 1_{x_iU} \|_B$$
the space  $B^\#(X)$ is a solid
Banach sequence space, i.e., if $\tau=\{\tau_i\}$ and $\lambda=\{\lambda_i\}$ are sequences such that
$\lambda \in B^\# (X)$ and for all $i\in I$ we have $|\tau_i |\le |\lambda_i|$, then
$\tau \in B^\#(X)$ and $\|\tau \|\le \|\lambda\|$. In the case were $B=L^p(G)$ we have $B^\#(X)=\ell^p(I)$.

For fixed $X=\{ x_i\}_{i\in I}$ the space $B^\#(X)$ only depends
on the compact neighborhood $U$ up to norm equivalence.
Further, if $X=\{ x_i \}_{i\in I}$ and $Y=\{ y_i \}_{i\in I}$ are
two $U$-relatively separated sequences
with same index set such that $x_i^{-1}y_i \in V$
for some compact set $V$, then $B^\#(X)=B^\#(Y)$ with equivalent norms.
For these properties consult Lemma 3.5 in \cite{Feichtinger1989a}.

For a given compact neighbourhood $U$ of the identity
a sequence of non-negative functions $\{ \psi_i\}$ is called a 
\emph{bounded uniform partition of unity} subordinate to $U$ (or $U$-BUPU), if
there is a $U$-relatively separated
sequence $\{ x_i\}$, such that
$\supp(\psi_i)\subseteq x_i U$ and for all $x\in G$ we have
$\sum_i \psi_i (x) =1$. Note that for a given $x\in G$ this sum is over
finite indices.

\subsection{Discretization operators for
  reproducing kernel Banach spaces on Lie groups}

Let $G$ be Lie group with Lie algebra $\mathfrak{g}$
and exponential map $\exp:\mathfrak{g}\to G$. Then for
$X\in\mathfrak{g}$ we define the right and left
differential operators (if the limits exist)
\begin{equation*}
  R(X)f(x) := \lim_{t\to 0} \frac{f(x\exp(tX))-f(x)}{t}
  \qquad\text{and}\qquad
  L(X)f(x) := \lim_{t\to 0} \frac{f(\exp(-tX)x)-f(x)}{t}.
\end{equation*}
We note that $R([X,Y])=R(X)R(Y)-R(Y)R(X)$ and $L([X,Y])=L(X)L(Y)-L(Y)L(X)$.
Fix a basis $\{ X_i\}_{i=1}^{\mathrm{dim}(G)}$
for $\mathfrak{g}$. For a multi index $\alpha$ of
length $|\alpha| =k$
with entries between $1$ and $\mathrm{dim}(G)$
we introduce the operator $R^\alpha$ of subsequent right differentiations
\begin{equation*}
  R^{\alpha} f
  := R(X_{\alpha(k)})  R(X_{\alpha(k-1)}) \cdots R(X_{\alpha(1)}) f.
\end{equation*}
Similarly we introduce the operator $L^\alpha$ of subsequent
left differentiations
\begin{equation*}
  L^{\alpha} f
  := L(X_{\alpha(k)})  L(X_{\alpha(k-1)}) \cdots L(X_{\alpha(1)}) f.
\end{equation*}
We call the function $f$ right (respectively left) \emph{differentiable of order $n$},
if for every $x$ and all $|\alpha|\leq n$ the derivatives
$R^\alpha f(x)$ (respectively $L^\alpha f(x)$) exist.

In the remainder of this section we will assume that
\begin{enumerate}
\item $B$ is a solid left- and right-invariant Banach function
  space on which right translation
  % $x\mapsto r_x$
  is continuous.
\item The kernel $\Phi$ is smooth and
  satisfies $\Phi*\Phi = \Phi$.
\item The mappings $f\mapsto f*|L^\alpha \Phi|$ and
  $f\mapsto f*|R^\alpha \Phi|$ are continuous on $B$ for
  all $\alpha$ with $|\alpha |\leq \dim(G)$.
\end{enumerate}
By setting $\alpha=0$
we see that convolution with
$\Phi$ is a continuous projection from $B$ onto
the reproducing kernel Banach space
\begin{equation*}
  B_\Phi := \{ f\in B \mid f*\Phi =f   \}.
\end{equation*}

We now describe sampling theorems and atomic decompositions
for the space $B_\Phi$ based on the smoothness of the (not necessarily
integrable) reproducing kernel $\Phi$.
The results can be found in
\cite{Christensen2010} and build
on techniques from
\cite{Feichtinger1988,Feichtinger1989a,Feichtinger1989b,Grochenig1991},
where it should be mentioned that integrability is required.
For integrable kernels not parametrized by groups
see also \cite{Fornasier2005} and \cite{Dahlke08}.
\begin{lemma}
 The mapping
  \begin{equation*}
    B^\# \ni \{ \lambda_i \}
    \mapsto \sum_i \lambda_i \ell_{x_i}\Phi \in B_\Phi
  \end{equation*}
  is continuous.
  The convergence of the sums above is pointwise, and
  if $C_c(G)$ is dense in $B$
  the convergence is also in norm.
\end{lemma}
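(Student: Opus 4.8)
The plan is to show the stated map is a well-defined bounded operator into $B_\Phi$ by comparing it termwise with the convolution operator $f \mapsto f * |\Phi|$, whose boundedness on $B$ is assumed in hypothesis (3) (the case $\alpha = 0$). First I would fix a $U$-relatively separated sequence $\{x_i\}_{i\in I}$ underlying the sequence space $B^\#(X)$, and for each $i$ compare $\ell_{x_i}\Phi$ with the average of left translates $\ell_y\Phi$ over $y \in x_iU$: using a mean value / Taylor estimate along one-parameter subgroups and the bound on $|R^\alpha\Phi|$ for $|\alpha| = 1$, one gets a pointwise domination of the form $|\ell_{x_i}\Phi(x)| \le C \, \bigl(1_{x_iU} * h\bigr)(x)$ for a fixed function $h$ built from $\sup_{y\in U}|\ell_y R^\alpha \Phi|$ (or rather, one dominates $|\ell_{x_i}\Phi(x)|$ by $C \cdot \frac{1}{|U|}\int_{x_iU} M(y^{-1}x)\, d\mu(y)$ where $M(x) = \sup_{|\alpha|\le 1}\sup_{v\in U}|L^\alpha\Phi(v^{-1}x)|$ or a similar majorant). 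The key point is that such an $M$ lies in $L^1$ and, more importantly, that convolution against it — or against $|L^\alpha\Phi|$ — is continuous on $B$ by hypothesis (3).

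Granting that pointwise domination, the estimate $\bigl|\sum_i \lambda_i \ell_{x_i}\Phi(x)\bigr| \le C \sum_i |\lambda_i| \, 1_{x_iU}(x) \cdot (\text{something}) $ is not quite the right shape; instead I would argue as follows. Write $g := \sum_i |\lambda_i| 1_{x_iU}$, which by definition of $B^\#(X)$ lies in $B$ with $\|g\|_B = \|\{\lambda_i\}\|_{B^\#}$. Then $\sum_i |\lambda_i|\, |\ell_{x_i}\Phi(x)| \le C' (g * \widetilde M)(x)$ where $\widetilde M$ is the majorant above, because $|\ell_{x_i}\Phi(x)| \le C' \int_{x_iU} \widetilde M(y^{-1}x)\,d\mu(y)/|U|$ and summing over $i$ reconstitutes the integral of $\widetilde M$ against $g$ (up to the bounded overlap constant $N$ of the covering). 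Solidity of $B$ then gives $\sum_i \lambda_i \ell_{x_i}\Phi \in B$ with norm $\le C'' \|\{\lambda_i\}\|_{B^\#}$, and since each summand satisfies $\ell_{x_i}\Phi * \Phi = \ell_{x_i}(\Phi * \Phi) = \ell_{x_i}\Phi$ (left translation commutes with right convolution) and convolution by $\Phi$ is a bounded projection onto $B_\Phi$, the sum lies in $B_\Phi$.

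For the convergence statements: pointwise convergence follows because for fixed $x$ only finitely many $x_iU$ contain $x$-neighborhoods relevant to $\Phi$'s support behavior — more carefully, the partial sums are dominated in absolute value by the convergent pointwise series $C'(g*\widetilde M)(x)$, so dominated convergence applies; and norm convergence, when $C_c(G)$ is dense in $B$, follows from the solidity-based tail estimate $\|\sum_{i \notin F}\lambda_i\ell_{x_i}\Phi\|_B \le C'' \|\sum_{i\notin F}|\lambda_i| 1_{x_iU}\|_B \to 0$, the latter being the analogue of \cite{Rauhut2005} Lemma 4.3.1 already invoked for $B^\#(X)$ itself.

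The main obstacle I anticipate is the pointwise domination of $|\ell_{x_i}\Phi(x) - \ell_y\Phi(x)|$ for $y \in x_iU$ by something controlled by $|L^\alpha\Phi|$: this requires a quantitative mean-value inequality on the Lie group comparing $\Phi$ at two nearby points via a path through $\exp$ of Lie algebra elements of bounded size, and then bookkeeping to see that the resulting bound, after summing over the $U$-relatively separated family with its finite overlap $N$, collapses to a single convolution $g * |L^\alpha\Phi|$ (or a finite sum of such) rather than something worse. Once that geometric estimate is in hand, everything else is a routine application of solidity plus hypotheses (1)--(3); this is exactly the mechanism of \cite{Christensen2010}, and I would cite the relevant lemma there for the details of the mean-value step.
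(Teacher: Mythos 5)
Your argument is essentially the standard one: the paper gives no proof of this lemma, deferring to \cite{Christensen2010}, and your outline (pointwise domination of $\sum_i |\lambda_i|\,|\ell_{x_i}\Phi|$ by a convolution $g*\widetilde M$ with $g=\sum_i|\lambda_i|1_{x_iU}$, whose $B$-norm is by definition $\|\{\lambda_i\}\|_{B^\#}$, followed by solidity, the projection property of convolution with $\Phi$, and the tail estimate for norm convergence) is exactly the mechanism used there. One small correction: the oscillation majorant produced by iterating the fundamental theorem of calculus over the $n$ coordinate directions of $U_\epsilon$ involves the left derivatives $L^\alpha\Phi$ for all $|\alpha|\leq\dim(G)$, not merely $|\alpha|\leq 1$, which is precisely why the standing assumption (c) requires continuity of $f\mapsto f*|L^\alpha\Phi|$ up to that order (and why right invariance of $B$ is needed to absorb the residual local averages over $U_\epsilon$).
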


From now on we let $U_\epsilon$ denote the set
\begin{equation*}
  U_\epsilon := \left\{
    \exp(t_1X_1)\cdots\exp(t_nX_n)
    \Mid -\epsilon \leq t_k \leq \epsilon ,1\leq k\leq n
  \right\}.
\end{equation*}

\begin{theorem}\label{thm:8}
  With the assumptions on $B$ and $\Phi$ 
  listed above we can choose $\epsilon$ small enough
  that for any $U_\epsilon$-BUPU $\{ \psi_i\}$
  %with $\supp(\psi_i)\subseteq x_iU_\epsilon$
  the following three operators are all invertible on $B_\Phi$.
  \begin{align*}
    T_1f &:= \sum_i f(x_i)(\psi_i*\Phi) \\
    T_2f &:= \sum_i \lambda_i(f) \ell_{x_i}\Phi\quad
    \text{with $\lambda_i(f) = \textstyle\int f(x) \psi_i(x)\, dx$} \\
    T_3f &:= \sum_i c_i f(x_i) \ell_{x_i}\Phi
    \quad\text{  with $c_i = \textstyle\int \psi_i(x)\,dx$}.
  \end{align*}
  The convergence of the sums above is pointwise and,
  if $C_c(G)$ is dense in $B$,
  the convergence is also in norm.
  Furthermore
  both $\{ \lambda_i(T^{-1}_2 f),\ell_{x_i}\Phi\}$
  and
  $\{ c_iT_3^{-1}f(x_i),\ell_{x_i}\Phi\}$
  are atomic decompositions of $B_\Phi$ and
  $\{c_i\ell_{x_i}\Phi \}$ is a frame.
\end{theorem}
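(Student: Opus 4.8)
The plan is to follow the Feichtinger--Gr\"ochenig perturbation method, in the form adapted to non-integrable reproducing kernels in \cite{Christensen2010}. The crux is to show that for every $\delta>0$ there is an $\epsilon>0$ such that for any $U_\epsilon$-BUPU one has
\begin{equation*}
  \| f - T_j f \|_B \le \delta\, \| f\|_B \qquad\text{for all } f\in B_\Phi,\ j=1,2,3.
\end{equation*}
Granting this with $\delta<1$, each $T_j$ equals $I-(I-T_j)$ on $B_\Phi$ and is therefore invertible with bounded inverse by the Neumann series (note $T_j$ maps $B_\Phi$ into itself, since by the preceding Lemma the synthesis map $\{\lambda_i\}\mapsto\sum_i\lambda_i\ell_{x_i}\Phi$ sends $B^\#$ continuously into $B_\Phi$, and the relevant coefficient sequences lie in $B^\#$ by solidity). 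That same Lemma gives pointwise convergence of all three series, and norm convergence once $C_c(G)$ is dense in $B$; for $T_1$ the pointwise convergence is in any case immediate, as only finitely many $\psi_i$ are nonzero near a given point.

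\emph{The perturbation estimate.} Using $f=f*\Phi$ on $B_\Phi$ and $\sum_i\psi_i\equiv 1$ one rewrites the errors as oscillation integrals; for instance
\begin{equation*}
  f(x)-T_2f(x)=\sum_i\int_G\psi_i(z)\,f(z)\,\big(\ell_{x_i^{-1}z}\Phi-\Phi\big)(x_i^{-1}x)\,d\mu(z),
\end{equation*}
and similarly $f-T_1f$ involves the oscillation $r_{x_i^{-1}z}\Phi-\Phi$ of $\Phi$ under right translation (arising from replacing $f(z)$ by $f(x_i)$, again via $f=f*\Phi$), while $f-T_3f$ involves both. Since $z\in\supp\psi_i\subseteq x_iU_\epsilon$ the translation parameter $x_i^{-1}z$ lies in $U_\epsilon$; expanding it in the coordinates $\exp(t_1X_1)\cdots\exp(t_nX_n)$ with $|t_k|\le\epsilon$ and applying the fundamental theorem of calculus one coordinate at a time expresses $\ell_{x_i^{-1}z}\Phi-\Phi$ (resp.\ $r_{x_i^{-1}z}\Phi-\Phi$) through the derivatives $L(X_k)\Phi$ (resp.\ $R(X_k)\Phi$) evaluated at points of a slightly larger neighbourhood of $x_i$; a local Sobolev-type inequality then bounds the resulting supremum over that neighbourhood by an average of $|L^\alpha\Phi|$ (resp.\ $|R^\alpha\Phi|$) with $|\alpha|\le\dim(G)$. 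Writing these averages as convolutions, using solidity of $B$ together with the left/right invariance estimates, and invoking hypothesis (3) --- continuity on $B$ of $f\mapsto f*|L^\alpha\Phi|$ and $f\mapsto f*|R^\alpha\Phi|$ for $|\alpha|\le\dim(G)$ --- yields $\|f-T_jf\|_B\le C\epsilon\,\|f\|_B$. This step is the heart of the proof, and I expect the combination of the multivariable Taylor expansion on $G$ with the passage from pointwise oscillation bounds to $B$-norm bounds to be the main obstacle; the hypotheses on $B$ and $\Phi$ are tailored to make exactly this argument work, and the details may be found in \cite{Christensen2010}.

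\emph{Atomic decompositions and the frame.} These are now formal. The functionals $f\mapsto\lambda_i(f)=\int_G f\psi_i\,d\mu$ are bounded from $B$ into $B^\#$, because $\sum_i|\lambda_i(f)|\,1_{x_iU}$ is dominated pointwise by a fixed local average of $|f|$ and $B$ is solid; likewise $f\mapsto f(x_i)=(f*\Phi)(x_i)$ is bounded from $B_\Phi$ into $B^\#$ by the smoothness of $\Phi$ and the same solidity argument. Hence, for $f\in B_\Phi$, writing $f=T_2(T_2^{-1}f)$ gives $f=\sum_i\lambda_i(T_2^{-1}f)\,\ell_{x_i}\Phi$; the analysis map $f\mapsto\{\lambda_i(T_2^{-1}f)\}$ is bounded $B_\Phi\to B^\#$ (composition of the bounded $T_2^{-1}$ with the bounded coefficient map) and the synthesis map is bounded by the Lemma, so $\|\{\lambda_i(T_2^{-1}f)\}\|_{B^\#}\asymp\|f\|_B$; this is the atomic decomposition $\{\lambda_i(T_2^{-1}f),\ell_{x_i}\Phi\}$, and the argument for $\{c_iT_3^{-1}f(x_i),\ell_{x_i}\Phi\}$ is identical. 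Finally, for the frame take the functionals $f\mapsto f(x_i)$ on $B_\Phi$ --- bounded into $B^\#$ as above, with the equivalence $\|\{f(x_i)\}\|_{B^\#}\asymp\|f\|_B$ since $\|f\|_B=\|T_3^{-1}T_3f\|\lesssim\|\{c_if(x_i)\}\|_{B^\#}\lesssim\|\{f(x_i)\}\|_{B^\#}$ --- and the bounded reconstruction operator $T:=T_3^{-1}\circ S$ with $S(\{a_i\})=\sum_i c_i a_i\,\ell_{x_i}\Phi$; then $T(\{f(x_i)\})=T_3^{-1}(T_3f)=f$, so $\{c_i\ell_{x_i}\Phi\}$ is a Banach frame for $B_\Phi$.
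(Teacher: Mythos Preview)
Your proposal is correct and follows the same line as the paper, which in fact does not prove Theorem~\ref{thm:8} at all but merely quotes it from \cite{Christensen2010}; your sketch of the Neumann-series perturbation argument, with the oscillation of $\Phi$ controlled via the derivatives $L^\alpha\Phi$, $R^\alpha\Phi$ and hypothesis~(c), is precisely the method of that reference. In short, you have supplied more detail than the paper itself, and what you supply is faithful to the cited source.
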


\subsection{Decomposition of coorbits using smoothness}

To obtain frames and atomic decompositions via Theorem~\ref{thm:8}
we will need to introduce differentiability with respect to $\pi$ and
$\pi^*$.
A vector $u\in S$ is called $\pi$-weakly differentiable in the direction
$X\in\mathfrak{g}$ if there is a vector, denoted by $\pi(X)u$, in $S$
such that for all $\phi\in S^*$
\begin{equation*}
  \dup{\phi}{\pi(X)u}=
  \frac{d}{dt}\Big|_{t=0}\dup{\phi}{\pi(e^{tX})u}.
\end{equation*}
For a fixed basis $\{X_i \}\subseteq \mathfrak{g}$
and for a multi-index $\alpha$ define
$\pi(X^\alpha)u$ (when it makes sense) by
\begin{equation*}
  \dup{\phi}{\pi(X^\alpha)u}:=
  \dup{\phi}{\pi(X_{\alpha(k)}) \pi(X_{\alpha(k-1)}) \cdots \pi(X_{\alpha(1)})u}.
\end{equation*}
We note that if $\Phi(x) = W_u(\phi)(x)$
and $u$ is $\pi$-weakly differentiable, 
then $R^\alpha\Phi (x) = W_{\pi(X^\alpha)u}(u)(x)$. We have similar 
notion for the representation $(\pi^*,S^*)$.
A vector $\phi\in S^*$ is called $\pi^*$-weakly differentiable
in the direction
$X\in\mathfrak{g}$ if there is a
vector denoted $\pi^*(X)\phi \in S^*$
such that for all $v\in S$
\begin{equation*}
  \dup{\pi^*(X)\phi}{v}=
  \frac{d}{dt}\Big|_{t=0}\dup{\pi^*(e^{tX})\phi}{v}.
\end{equation*}
For a multi-index $\alpha$ define
$\pi^*(X^\alpha)\phi$ by (when it makes sense)
\begin{equation*}
  \pi^*(X^\alpha)\phi =
  \pi^*(X_{\alpha(k)}) \pi^*(X_{\alpha(k-1)}) \cdots \pi^*(X_{\alpha(1)})\phi
\end{equation*}
We say that $v\in S$, respectively $\phi\in S^*$, is
$k$-times differentiable if $\pi (X^\alpha )v$, respectively 
$\pi^*(X^\alpha)\phi$, exists for all $\alpha$ with $|\alpha|\le k$.
We note that if $\Phi(x) = W_u(\phi)(x)$ and $\phi$ is $\pi^*$-weakly
differentiable, then $L^\alpha\Phi (x) = W_{u}(\pi^*(X^\alpha) \phi)(x)$.
The following result provides atomic decompositions and frames
for coorbit spaces.
\begin{theorem}
  \label{thm:7}
  Let $B$ be a solid and left and right invariant
  Banach function space for which right translations are continuous.
  Assume that $u \in S\subseteq S^*$ is both
  $\pi$- and $\pi^*$-weakly differentiable and
  satisfies the requirements in
  Theorem~\ref{thm:coorbitsduality}.
  Furthermore assume the mappings
  \[f \mapsto f*W_{\pi(X^\alpha)u}(u)\text{ and } f\mapsto f*W_{u}(\pi^*(X^\alpha)u)\]
  are continuous on $B$
  for $|\alpha|\leq \dim(G)$.
  Then we can choose $\epsilon$ small enough that for any
  $U_\epsilon$-relatively separated set $\{ x_i\}$
  the family
  $\{ \pi(x_i)u\}$ is a frame for $\Co_S^u B$.
  Furthermore the families $\{ \lambda_i\circ T_2^{-1}\circ W_u,\pi(x_i)u \}$
  and $\{ c_i T_3^{-1}\circ W_u,\pi(x_i)u \}$ both
  form atomic decompositions for $\Co_S^u B$.
  In particular $\phi\in \Co_S^u B$ can be reconstructed by
  \begin{align*}
    \phi &=
    W_u^{-1 }T_1^{-1} \left( \sum_i W_u(\phi)(x_i)\psi_i*W_u(u)
    \right) \\
    \phi &= \sum_i \lambda_i(T^{-1}_2 W_u(\phi)) \pi(x_i)u     \\
    \phi &= \sum_i c_i (T^{-1}_3W_u(\phi))(x_i) \pi(x_i)u
  \end{align*}
  The convergence of the last two sums is in $S^*,$
  and with convergence in $\Co_S^u B$ if
  $C_c(G)$ is dense in $B$.
  Here $\{\psi_i \}$ is any $U_\epsilon$-BUPU for which
  $\supp(\psi_i)\subseteq x_iU_\epsilon$.
  $T_1,T_2,T_3$ and $c_i$ and $\lambda_i$ are defined as in
  Theorem~\ref{thm:8}.
\end{theorem}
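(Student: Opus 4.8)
The plan is to run the whole argument as a transport of Theorem~\ref{thm:8} along the isometric isomorphism $W_u\colon\Co_S^u B\to B_u$ that Theorem~\ref{thm:coorbitsduality} provides. I would set $\Phi:=W_u(u)$, observe that $B_u=\{f\in B\mid f*\Phi=f\}=B_\Phi$ by definition, and then (i) check that $(B,\Phi)$ satisfies the three standing assumptions stated just before Theorem~\ref{thm:8}; (ii) apply Theorem~\ref{thm:8} to obtain the sampling operators $T_1,T_2,T_3$ together with the associated frame and atomic decompositions of $B_\Phi$; (iii) pull everything back through $W_u$.

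For step (i), assumption (1) is just the hypothesis on $B$. For assumption (2), I would get $\Phi*\Phi=\Phi$ by reading the hypothesis $W_u(\phi)*W_u(u)=W_u(\phi)$ of Theorem~\ref{thm:coorbitsduality} at $\phi=u\in S\subseteq S^*$, and smoothness of $\Phi$ from the weak differentiability of $u$: since $x\mapsto\pi(x)v$ is continuous into $S$ for each $v\in S$, every $W_v(\psi)$ is continuous, and the identities $R^\alpha\Phi=W_{\pi(X^\alpha)u}(u)$, $L^\beta\Phi=W_u(\pi^*(X^\beta)u)$ together with $L^\beta R^\alpha\Phi=W_{\pi(X^\alpha)u}(\pi^*(X^\beta)u)$ show that all iterated one-sided derivatives of $\Phi$ exist and are continuous (the order is irrelevant since $R$ and $L$ commute), i.e.\ $\Phi\in C^\infty(G)$. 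For assumption (3), the same identities identify $R^\alpha\Phi$ and $L^\alpha\Phi$ with the kernels appearing in the present theorem, so the assumed continuity on $B$ of $f\mapsto f*W_{\pi(X^\alpha)u}(u)$ and $f\mapsto f*W_u(\pi^*(X^\alpha)u)$ for $|\alpha|\le\dim(G)$ is the continuity of $f\mapsto f*R^\alpha\Phi$ and $f\mapsto f*L^\alpha\Phi$; the passage to the moduli $|R^\alpha\Phi|$ and $|L^\alpha\Phi|$ demanded by Theorem~\ref{thm:8} is where I would have to use the solidity of $B$ together with the concrete form of these kernels as wavelet transforms of fixed smooth vectors, and I expect this to be the one step requiring genuine work rather than bookkeeping. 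With (i) in hand, Theorem~\ref{thm:coorbitsduality}(\ref{prop3}) makes $W_u\colon\Co_S^u B\to B_\Phi$ an isometric isomorphism intertwining $\pi^*$ with left translation; since $S\subseteq S^*$ with $\pi^*|_S=\pi$, this gives $W_u(\pi(x)u)=\ell_x\Phi$ and $W_u^{-1}(\ell_x\Phi)=\pi(x)u$.

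Carrying out (ii)--(iii), I would apply Theorem~\ref{thm:8} to $(B,\Phi)$, fixing $\epsilon$ so small that for every $U_\epsilon$-BUPU $\{\psi_i\}$ with $\supp(\psi_i)\subseteq x_iU_\epsilon$ the operators $T_1,T_2,T_3$ are invertible on $B_\Phi$, $\{c_i\ell_{x_i}\Phi\}$ is a frame, and the two prescribed families are atomic decompositions of $B_\Phi$. Transporting along $W_u$ is then formal: a coefficient functional $\Lambda$ on $B_\Phi$ becomes $\Lambda\circ W_u$ on $\Co_S^u B$ with the same coefficients and the same norm, each atom $\ell_{x_i}\Phi$ becomes $W_u^{-1}(\ell_{x_i}\Phi)=\pi(x_i)u$, and a bounded reconstruction operator $T\colon B^\#\to B_\Phi$ becomes $W_u^{-1}\circ T\colon B^\#\to\Co_S^u B$, still bounded, so the conditions in Definitions~\ref{def:1} and~\ref{def:3} survive verbatim. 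This yields that $\{\pi(x_i)u\}$ is a frame for $\Co_S^u B$ and that $\{\lambda_i\circ T_2^{-1}\circ W_u,\pi(x_i)u\}$ and $\{c_iT_3^{-1}\circ W_u,\pi(x_i)u\}$ are atomic decompositions. I would derive the three displayed reconstruction formulas by putting $f=W_u(\phi)\in B_\Phi$, using $f=T_j(T_j^{-1}f)$ from Theorem~\ref{thm:8}, and applying $W_u^{-1}$; for instance $T_1f=\sum_i f(x_i)(\psi_i*\Phi)=\sum_i W_u(\phi)(x_i)\,\psi_i*W_u(u)$, whence $\phi=W_u^{-1}T_1^{-1}\bigl(\sum_i W_u(\phi)(x_i)\,\psi_i*W_u(u)\bigr)$, and similarly for $T_2,T_3$.

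Finally, for the convergence claims: the sums in Theorem~\ref{thm:8} converge pointwise on $G$, and in the norm of $B$ when $C_c(G)$ is dense in $B$, so, $W_u^{-1}\colon B_\Phi\to\Co_S^u B$ being isometric, norm convergence transfers directly to $\Co_S^u B$. For the last two series I would also obtain convergence in $S^*$ by noting that their partial sums lie in the linear span of $\{\pi(x_i)u\}\subseteq S$ and, by the Lemma preceding Theorem~\ref{thm:8} applied to the (finite-norm) coefficient sequences, are uniformly bounded in $\Co_S^u B$; combined with the pointwise convergence of their $W_u$-images and the continuous inclusion $\Co_S^u B\hookrightarrow S^*$ (part of the coorbit construction, see \cite{Christensen2011}), this forces weak$^*$ convergence in $S^*$. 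As indicated, the only real obstacle I foresee is the verification of assumption (3) for $\Phi$ — concretely, controlling convolution against $|R^\alpha\Phi|$ and $|L^\alpha\Phi|$ on $B$ — the rest being an application of the quoted theorems followed by transport along an isometry.
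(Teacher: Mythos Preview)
The paper does not actually prove Theorem~\ref{thm:7}; it is quoted from \cite{Christensen2010} as part of the background in Section~\ref{Se-Coorbit}, with no proof environment following the statement. So there is no ``paper's own proof'' to compare against, and your proposal is in fact the standard derivation one would expect: set $\Phi=W_u(u)$, verify the standing hypotheses preceding Theorem~\ref{thm:8}, apply that theorem to $B_\Phi$, and transport everything back along the isometric isomorphism $W_u\colon\Co_S^u B\to B_\Phi$ from Theorem~\ref{thm:coorbitsduality}(\ref{prop3}). Your identification of $R^\alpha\Phi=W_{\pi(X^\alpha)u}(u)$ and $L^\alpha\Phi=W_u(\pi^*(X^\alpha)u)$, and the subsequent pull-back of frames and atomic decompositions, is exactly right and is pure bookkeeping.

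You have also put your finger on the one genuine issue: the standing assumption~(c) before Theorem~\ref{thm:8} asks for continuity of $f\mapsto f*|R^\alpha\Phi|$ and $f\mapsto f*|L^\alpha\Phi|$, whereas the hypothesis in Theorem~\ref{thm:7} only gives continuity of $f\mapsto f*R^\alpha\Phi$ and $f\mapsto f*L^\alpha\Phi$. Solidity of $B$ lets you go from the former to the latter (via $|f*g|\le |f|*|g|$), but not conversely, so this is not a formality. In the applications of the present paper (Theorem~\ref{analyze vector}) the gap is harmless, because the relevant convolution bounds are obtained from a Young-type inequality in $L^{p,q}_s$ that controls $\|f*g\|$ by $\|f\|\cdot\|g\|_{L^{1,1}_{-s}}$, and this bound already involves $|g|$. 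But as a self-contained proof of Theorem~\ref{thm:7} from Theorem~\ref{thm:8} your argument does have this gap, and your honest flagging of it is appropriate: either one strengthens the hypothesis of Theorem~\ref{thm:7} to match~(c), or one appeals to the finer estimates in \cite{Christensen2010} that underlie Theorem~\ref{thm:8} and in practice already deliver the absolute-value version.
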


\section{Besov spaces  on stratified Lie Groups}\label{besov-spaces}
\noindent
In this section we introduce the notion of \emph{stratified Lie groups}. We collect some needed
fundamental work by Folland, Hulanicki and Stein \cite{Folland75,FollandStein82,Hulanicki84}. We then
introduce \emph{homogeneous Besov spaces} for stratified Lie groups following \cite{FuMa}. Our main result in this section
is Theorem \ref{norm-equi-general} which is crucial for coorbit description and atomic decomposition of  the Besov spaces. 

\subsection{Stratified Lie groups}\label{preliminary-and-notations}
 A connected and simply connected   Lie group ${G}$
is called stratified
if its Lie algebra $\mathfrak{g}$ decomposes as a direct sum
\begin{equation*}%\label{eq-stratified}
\mathfrak{g}= V_1\oplus \cdots \oplus V_m=\bigoplus_{j=1}^m V_j\quad \text{ such that } \quad [V_1,V_k] =V_{k+1} \text{ for }1\leq k\leq m
\end{equation*}
where we set
$V_{m+1}=\{0\}$, see \cite{Folland75}. Note that induction shows that
\begin{equation}\label{eq-ViVj}
[V_i,V_j]\subseteq V_{i+j} \text{ for } i,j\ge 1\, .
\end{equation}
Thus $\fg$ is step $m$ nilpotent.
{}From now on $\fg$ is assumed  stratified with fixed stratification $(V_1,\ldots ,V_m)$.
We identify $G$ with $\mathfrak{g}$ through the exponential map. The multiplication is then given by the
Cambell-Hausdorff formula. Examples of stratified Lie groups are Euclidean spaces $\RR^n$, the
Heisenberg group $\HH^n$, and the upper triangular matrices with ones
on the diagonal.

For $r>0$ define the $\delta_r : \fg\to \fg$ by
$\delta_r (X)=r^k X$ if $x\in V_k$. Equation (\ref{eq-ViVj}) shows that $\delta_r$ is an automorphism of $\fg$. The family $\{\delta_r\}_{r> 0}$ is called the canonical family of dilations for $\mathfrak g$. It gives rise to dilations $\gamma_r : G\to G$ given by
\begin{equation*}%\label{def-dilG}
\gamma_r(x)=\exp \circ  \delta_r \circ \log (x)
\end{equation*}
with $\log =\exp^{-1}$.
We note that $\gamma_r$ is a homomorphism, and
following \cite{Folland75} we often write $rx$ for $\gamma_r(x)$.

The Lebesgue measure on $\fg$ defines a left and right Haar measure $\mu_G$ on $G$.
We will often write $dx$ instead of $d\mu_G(x)$. 
  Let the number $Q:= \sum_1^m j(\dim V_j)$ be
the \emph{homogeneous degree} of $G$, then we have
\begin{equation*}%\label{eq-homodmu}
  \int_G f(rx)\,dx = r^{-Q} \int_G f(x)\,dx.
\end{equation*}
For example, the homogenous degrees for $G=\RR^n$ and $\HH^n$ are
$Q=n$ and $Q=2n+2$, respectively.

A homogeneous norm on $G$ is a map $|\cdot |: \ G\rightarrow [0,\infty)$, $x\mapsto |x|$,
which is continuous and smooth, except at zero, and satisfies
the following:
\begin{enumerate}
\item $|x|=0$ if and only if $x=0$,
\item  $|rx|=r |x|$ for all $r>0$, and $x\in G$,
\item $|x|=|x^{-1}|$.
\end{enumerate}
Homogenous norms always exist on stratified Lie groups
\cite{Folland75,FollandStein82}. 
For example, on
the Heisenberg group $\HH^n$ with the underlying manifold $\RR^{2n}\times \RR$, a homogenous norm is given by
\begin{align}
  \notag |(x,t)|=(\sum x_j^4+t^2)^{\frac{1}{4}}
\end{align}
It follows by \cite{Folland75}, Lemma 1.2, that
the balls $\{x\in G\mid |x|\le R\}$ are compact  and by
\cite{Folland75}, Lemma 1.4,
there exists a
constant $C>0$ such that $|xy| \le C(|x|+|y|)$ holds for all $x, y\in G$.

\subsection{The sub-Laplacian on stratified Lie groups}

Let $n=\dim G$, $n_k=\dim V_k$ and let $X_1,\ldots ,X_n$ be a basis
for $\fg$ as before. 
We require that
$X_j\in V_k$ for $n_1+\ldots +n_{k-1}+1\le j\le n_1+\ldots +n_k$. In particular, $X_1,\ldots ,X_{n_1}$ is a basis for
$V_1$. Thus, the left invariant differential operators $R(X_1),\ldots ,R(X_{n_1})$ satisfy the H\"ormander condition
\cite{H67} or \cite{Folland75}. We will use coordinates $(x_1,\ldots ,x_n)$ on $\fg$ with respect to this basis. Note  that those
coordinates also form a global chart for $G$. In particular, the space $\cP$ of polynomial functions on $G$ is just the space
$\mathbb{C}[x_1,\ldots ,x_n]$ of polynomials in the coordinates $(x_1,\ldots ,x_n)$.

Denote by $\mathbb{D}(G)$ the algebra of left invariant differential operators on $G$. Recall that $X\mapsto R(X)$ defines a Lie algebra homomorphism into $\mathbb{D}(G)$ and $R(\fg )$ generates $\mathbb{D}(G)$. Note that $\mathbb{D}(G)$ is contained in the Weyl algebra
$\mathbb{C}[x_1,\ldots ,x_n,\partial_1,\ldots  ,\partial_n]$ of differential operators on $\fg$ with polynomial coefficients.

Denote by
$\cSg$  the usual space of Schwartz
functions on $\fg$ and set $\cSG : = \cSg$. Thus, the
Schwartz functions on $G$  
are the smooth functions on $G$ for which
\[
|f|_N :=
\sup_{|\alpha | \le N, x \in G} (1+|x|)^{N} |R^\alpha  f(x)| < \infty\, \quad \text{ for all } N\in \mathbb{N}\, .\]

The topology on $\cSG$ is defined by the family of seminorms $\{|\, \cdot \, |_N \mid N\in\mathbb{N}\}$, see \cite{FollandStein82}, Section D, Page 35.
 This  topology does not depended on
the stratification $(V_1,\ldots ,V_m)$.

According to our previous notation, the conjugate linear dual of $\cSG$ will be denoted by
$\cSd$ and the conjugate dual pairing is
denoted by $\dup{\cdot}{\cdot}$.

Define the space $\cSZ$ of Schwartz functions with vanishing moments by
\begin{align}\notag \cSZ:=\left\{ f\in \cSG \,\left|\,
    \int_G x^\alpha f(x)d\mu (x)=0~\text{ for all multi-indices}~ \alpha\in
    \NN^n\right.\right\}.
\end{align}
The space $\cSZ$ is a closed subspace of $\cSG$ with the relative
topology, and it forms an ideal under convolution:
$\cSG \ast \cSZ \subset \cSZ$.
The inclusion $\cSZ \hookrightarrow \cSG$ induces a continuous linear map $\cSd \to \cSZd$ with kernel
$\cP$, and thus $\cSZd\simeq \cSd/\cP $ in a canonical way.
For the proof of these facts we refer to \cite{FuMa}.

The sub-Laplacian operator on $G$ is
defined by
\begin{equation*}%\label{def-SubLap}
\sL:=-\sum_{i=1}^{n_1} R(X_i)^2\in \mathbb{D}(G)\, .
\end{equation*}
Note that $\sL$ depends on the space $V_1$ in the given stratification. As $R(X_1),\ldots ,R(X_{n_1})$ satisfies the H\"ormander condition it follows that
$\sL$ is hypoelliptic. Also, $\sL$ is formally self-adjoint and: for
any $f,g\in C_c^\infty(G)$, $\langle \sL f, g\rangle= \langle f,
\sL g\rangle$ which follows by partial integration.
The closure of $\sL$ is also denoted by $\sL$ and
has domain $\mathcal{D} = \{u \in L^2(G)\mid \sL u \in L^2(G)\}$,
where we use $\sL u$ in the sense of distributions. For more information we refer to  \cite{gm1,Folland75,FollandStein82}.
A linear differential operator $D$ on $G$ is called
homogenous of degree $p$ if $D (f\circ \gamma_a)= a^p (D f)\circ
\gamma_a$ for any $f$. If $X\in V_j$,
then $R(X)$ is homogeneous of
degree $j$.  It follows that $\sL^k$ is homogenous of degree $2k$.
For the spectral theory for unbounded operators we refer to \cite{Rudin}, in particular p. 356--370.
Since the closure of $\sL$ is self-adjoint and positive it follows that  $\sL$ has spectral resolution
\begin{align}
\notag \sL= \int_0^\infty \lambda \, dP_\lambda,
\end{align}
where $dP_\lambda$ is the corresponding projection valued measure.
For $\widehat \phi \in L^\infty(\RR^+)$ 
%$\widehat \phi \in L^\infty((0,\infty))$ 
define the 
commutative integral operator on $L^2(G)$ by
\begin{align*}%\label{integral-1} 
\widehat \phi (\sL):=\int_0^\infty 
\widehat \phi (\lambda)\, dP_\lambda.
\end{align*}
The operator norm is
$\|\widehat \phi (\sL)\| = \| \widehat \phi \|_\infty$, and
the correspondence $\widehat\phi \leftrightarrow \widehat\phi (\sL)$ satisfies
\begin{enumerate}
\item $(\widehat \phi \widehat \psi )(\sL)=\widehat \phi (\sL)\widehat \psi (\sL)$;
\item $\widehat \phi (\sL)^* =\overline{\widehat{\phi}}(\sL)$.
\end{enumerate}

\begin{theorem}[\cite{Hulanicki84}]\label{th-Hulan}
Let $\widehat{\phi}\in \cS(\RR^+)$. Then there exists $\phi \in \cSG$ such that
\begin{align}\label{eq-Hulan}
\widehat \phi (\sL)f = f \ast \phi \quad \text{ for all }
\; f\in \cSG~.
\end{align}
\end{theorem}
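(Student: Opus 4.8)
The plan is to follow the classical route via the heat semigroup; this is in essence Hulanicki's original argument \cite{Hulanicki84}, carried out in three stages. \emph{Reduction to a statement about the kernel.} Since $\widehat\phi$ is bounded, $\widehat\phi(\sL)$ is bounded on $L^2(G)$, and since $\sL$ is left invariant every spectral projection $P_\lambda$, hence also $\widehat\phi(\sL)$, commutes with all left translations $\ell_g$. Viewing $\widehat\phi(\sL)$ as a continuous operator $\cSG\to L^2(G)\hookrightarrow\cSd$, this left invariance forces it to be right convolution $f\mapsto f\ast\phi$ by a uniquely determined $\phi\in\cSd$ (Schwartz kernel theorem together with the invariance). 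Thus the whole problem reduces to showing $\phi\in\cSG$, i.e.\ that each seminorm $|\phi|_N$ is finite.

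\emph{The heat-kernel toolbox.} For $t>0$ the operator $e^{-t\sL}$ is convolution by a function $h_t\in\cSG$ (a classical fact, see \cite{FollandStein82}), with $h_t(x)=t^{-Q/2}h_1(\gamma_{t^{-1/2}}x)$; applying left-invariant operators, $\sL^k h_t$ is the kernel of $\sL^k e^{-t\sL}$, and homogeneity together with $h_1\in\cSG$ gives two-sided seminorm bounds of the shape $|\sL^k h_t|_N\le C_{N,k}\max(t^a,t^{-b})$ (polynomial growth as $t\to\infty$, polynomial blow-up as $t\to 0^+$). I would then seek to represent $\widehat\phi$ as a superposition of band functions $\lambda\mapsto(s\lambda)^k e^{-s\lambda}$ and, via the spectral theorem, transfer this to $\phi=\int_0^\infty K(s)\,s^k\,\sL^k h_s\,\tfrac{ds}{s}$; taking $k>Q/2$ makes the $s\to 0$ blow-up integrable, and if $K$ decays rapidly as $s\to 0$ and $s\to\infty$ the integral converges absolutely in $\cSG$, which would give $\phi\in\cSG$ and, by construction, $\widehat\phi(\sL)f=f\ast\phi$.

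\emph{The obstruction and how to remove it.} The weak point above is that, read through the Mellin transform on $\RR^+$, producing such a $K$ amounts to dividing $\mathcal{M}\widehat\phi$ by a shifted Gamma function; on vertical lines $1/\Gamma$ grows like $e^{\pi|\tau|/2}$ while $\mathcal{M}\widehat\phi$ (the Fourier transform of a Schwartz function of $\log s$) only decays super-polynomially, so the crude subordination does \emph{not} yield a Schwartz $K$---this is the main difficulty. I would get around it with the Helffer--Sj\"ostrand calculus: choose an almost analytic extension $\widetilde\phi_N$ of $\widehat\phi$ with $\bar\partial\widetilde\phi_N(z)=O(|\operatorname{Im}z|^{N-1})$, write $\widehat\phi(\sL)=\frac{1}{\pi}\int_{\CC}\bar\partial\widetilde\phi_N(z)\,(z-\sL)^{-N}\,dA(z)$, and bound the kernel of $(z-\sL)^{-N}$ in every seminorm by a fixed power of $1/|\operatorname{Im}z|$; this resolvent-kernel estimate is itself the crux, and follows from analyticity of the heat semigroup in the right half-plane together with the $h_t$-bounds above, by dominating the resolvent kernel with a rotated, weighted heat integral. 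For $N$ large relative to $Q$ the $\CC$-integral then converges in $\cSG$, giving $\phi\in\cSG$. An alternative that avoids complex analysis is to work with the regularizations $\phi_t:=h_t\ast\phi$, which is the kernel of $e^{-t\sL}\widehat\phi(\sL)$, show that $\{\phi_t\}_{0<t\le 1}$ is bounded in $\cSG$---using the subelliptic estimates for $\sL$ to control $R^\alpha\phi_t$ by powers of $\sL$---and then conclude $\phi\in\cSG$ from $\phi_t\to\phi$ in $\cSd$ and the Montel property of $\cSG$. In either case the identity $\widehat\phi(\sL)f=f\ast\phi$ on $\cSG$ is the one from the reduction step, now with a genuine Schwartz kernel.
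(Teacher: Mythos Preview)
The paper does not give a proof of this theorem at all: it is stated with the attribution \cite{Hulanicki84} and then used as a black box throughout Section~\ref{besov-spaces}. There is therefore no ``paper's own proof'' to compare your proposal against.

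That said, a few remarks on your sketch. Your reduction step is correct and standard. You also correctly identify the obstruction to the naive subordination $\phi=\int_0^\infty K(s)\,s^k\,\sL^k h_s\,ds/s$: the inverse Mellin factor $1/\Gamma$ destroys Schwartz decay of $K$, so one cannot simply integrate seminorm bounds for $\sL^k h_s$. Your two proposed workarounds are both plausible routes, but neither is what Hulanicki actually does, and each hides nontrivial work. The Helffer--Sj\"ostrand route is anachronistic (the formula postdates \cite{Hulanicki84}); more importantly, the displayed identity with $(z-\sL)^{-N}$ is not the standard formula (which has the first power of the resolvent), and making a higher-order version precise requires care. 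The genuine crux you flag---Schwartz seminorm bounds for the resolvent kernel with controlled blow-up in $|\mathrm{Im}\,z|^{-1}$---is essentially as hard as the theorem itself and would need a full argument (e.g.\ via the analytic continuation of $t\mapsto h_t$ and weighted integrals). Your alternative (regularize by $h_t\ast\phi$, get uniform $\cSG$-bounds via subelliptic estimates, then use Montel) is closer in spirit to arguments one finds in the literature and is the more realistic of the two to carry out, but ``using the subelliptic estimates for $\sL$ to control $R^\alpha\phi_t$ by powers of $\sL$'' is again where all the difficulty lives: you need polynomial-weighted $L^\infty$ bounds, not just $L^2$ ones, and this is exactly the content of Hulanicki's commutator estimates with polynomial multiplication operators on the group. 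In short, your outline is reasonable, but the steps you label as routine are the heart of the matter, and the paper wisely defers to \cite{Hulanicki84} rather than reproduce them.
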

The function $\phi$ is called the
\emph{distribution kernel} for $\widehat{\phi}$,
and from now on, if $\widehat{\phi}\in \cS (\RR^+)$
then $\phi\in\cSG$ will always denote the distribution kernel for
$\widehat{\phi}(\sL)$.
The following two lemmas are simple applications of Hulanicki's theorem.
\begin{lemma}\label{le_star}
Let $\widehat{\psi}\in\cS (\RR^+)$ with distribution kernel $\psi\in\cSG$. Then
the distribution kernel for $\overline{\widehat{\psi}}$ is $\psi^*$
where $\psi^*(x) := \overline{\psi(x^{-1})}.$
\end{lemma}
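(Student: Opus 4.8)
The plan is to reduce everything to the adjoint identity $\widehat\psi(\sL)^\ast=\overline{\widehat\psi}(\sL)$ from the functional calculus (property (2) above), together with the fact already recorded in this section that the Hilbert space adjoint of the convolution operator $g\mapsto g\ast\psi$ with $\psi\in L^1(G)$ is $h\mapsto h\ast\psi^\ast$.

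First I would record two preliminary observations. Since complex conjugation preserves $\cS(\RR^+)$, the function $\overline{\widehat\psi}$ again lies in $\cS(\RR^+)$, so Theorem~\ref{th-Hulan} furnishes a distribution kernel $\eta\in\cSG$ with $\overline{\widehat\psi}(\sL)f=f\ast\eta$ for all $f\in\cSG$; the lemma amounts to showing $\eta=\psi^\ast$. Secondly, $\psi^\ast\in\cSG$: this follows from the definition of the seminorms $|\cdot|_N$, using that inversion is a smooth diffeomorphism of $G$ with $|x^{-1}|=|x|$ and that a right-invariant derivative $R^\alpha(\psi^\ast)$ is, up to complex conjugation, a left-invariant derivative of $\psi$ evaluated at $x^{-1}$ — and since left- and right-invariant differential operators span the same space of operators with polynomial coefficients, the Schwartz class is unchanged (see \cite{FollandStein82}).

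Next I would identify the operator $\widehat\psi(\sL)$ on $L^2(G)$ with the convolution operator $g\mapsto g\ast\psi$. By Hulanicki's theorem $\widehat\psi(\sL)f=f\ast\psi$ for $f\in\cSG$; both sides extend to bounded operators on $L^2(G)$ — the left side because $\|\widehat\psi(\sL)\|=\|\widehat\psi\|_\infty$, the right side by Young's inequality since $\psi\in L^1(G)$ — and $\cSG$ is dense in $L^2(G)$, so the two bounded operators coincide. Taking adjoints and invoking property (2) of the functional calculus,
\[
\overline{\widehat\psi}(\sL)=\widehat\psi(\sL)^\ast=(g\mapsto g\ast\psi)^\ast=(h\mapsto h\ast\psi^\ast),
\]
where the last equality is the adjoint-of-convolution fact quoted above.

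Finally I would compare distribution kernels. Evaluating the displayed identity on $f\in\cSG$ gives $f\ast\eta=\overline{\widehat\psi}(\sL)f=f\ast\psi^\ast$, hence $f\ast(\eta-\psi^\ast)=0$ for every $f\in\cSG$. Since $\eta-\psi^\ast\in\cSG$ is continuous and $\cSG$ contains an approximate identity (for instance the dilates $t^{-Q}\phi(t^{-1}\cdot)$ of a fixed nonnegative $\phi\in C_c^\infty(G)$ with $\int_G\phi=1$), this forces $\eta-\psi^\ast=0$. Therefore $\psi^\ast$ is the distribution kernel of $\overline{\widehat\psi}(\sL)$, as claimed. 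The only mildly technical point is the membership $\psi^\ast\in\cSG$; everything else is a direct consequence of results already in hand, so I expect no genuine obstacle.
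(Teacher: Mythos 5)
Your argument is correct and rests on exactly the same two ingredients as the paper's proof, namely the functional-calculus identity $\overline{\widehat{\psi}}(\sL)=\widehat{\psi}(\sL)^{*}$ and the fact that the $L^{2}$-adjoint of $g\mapsto g\ast\psi$ is $h\mapsto h\ast\psi^{*}$; the paper simply tests the resulting identity $(f,\widehat{\psi}(\sL)^{*}g)=(f,g\ast\psi^{*})$ against all $f,g\in\cSG$ and concludes at once. Your additional steps (the $L^{2}$ extension, the comparison with the Hulanicki kernel via an approximate identity, and the check that $\psi^{*}\in\cSG$) just make explicit what the paper's one-line computation leaves implicit, so this is essentially the same proof.
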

\begin{proof} Let $f,g\in \cSG$ and recall that $\overline{\widehat{\psi}}(\sL)=\widehat{\psi}(\sL)^*$. The claim follows now because
\[(f,\widehat{\psi}(\sL)^*g)=(\widehat{\psi}(\sL)f,g)=(f*\psi, g)=(f,g*\psi^*)\, .\qedhere\]
\end{proof}
\begin{lemma}\label{le-LkpZero} Let $p$ be a polynomial function on $G$. Then there exists $k\in\NN$ such that $\sL^kp=0$.
\end{lemma}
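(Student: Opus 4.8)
The plan is to show that $\sL$ lowers the "homogeneous degree" of a polynomial, so that applying it enough times kills any polynomial. First I would recall that $\cP = \CC[x_1,\dots,x_n]$, and introduce the grading in which the coordinate $x_j$ is assigned weight $k$ whenever $X_j \in V_k$; write $\cP = \bigoplus_{d\ge 0}\cP_d$ for the corresponding decomposition into spaces of homogeneous polynomials of weighted degree $d$ (these are exactly the $p$ with $p\circ\gamma_a = a^d p$). Since $\cL = -\sum_{i=1}^{n_1} R(X_i)^2$ and each $R(X_i)$ with $X_i\in V_1$ is homogeneous of degree $1$, the operator $\sL$ is homogeneous of degree $2$, hence maps $\cP_d$ into $\cP_{d-2}$ (with the convention $\cP_d = \{0\}$ for $d<0$). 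Therefore $\sL^k$ maps $\cP_d$ into $\cP_{d-2k}$, which is $\{0\}$ once $2k > d$.

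To finish, I would combine this with linearity: an arbitrary $p\in\cP$ is a finite sum $p = \sum_{d\le D}p_d$ with $p_d\in\cP_d$, and choosing any $k$ with $2k>D$ gives $\sL^k p_d = 0$ for every $d\le D$, hence $\sL^k p = 0$. The only point that needs a small argument is that $R(X_i)$ genuinely sends $\cP_d$ to $\cP_{d-1}$ rather than merely to polynomials of bounded total degree; this follows from the stated homogeneity $R(X)(f\circ\gamma_a) = a^{j}(R(X)f)\circ\gamma_a$ for $X\in V_j$ applied with $j=1$, together with the fact that $\cP_{d-1}$ is precisely the $a^{d-1}$-eigenspace of $f\mapsto f\circ\gamma_a$ inside $\cP$. (Alternatively, one can argue directly in coordinates: in the Weyl-algebra picture $R(X_i)$ is a polynomial-coefficient vector field, and the Campbell--Hausdorff formula shows that for $X_i\in V_1$ its coefficients are homogeneous of weighted degrees compatible with lowering the grading by one.)

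I do not expect any serious obstacle here; the statement is essentially a bookkeeping consequence of the dilation structure already set up in this section. The only thing to be a little careful about is keeping the two notions of "degree" straight — ordinary total polynomial degree versus the weighted homogeneous degree attached to the stratification — and making sure the argument uses the weighted one, for which $\sL$ strictly decreases degree. Once that is fixed the bound on $k$ in terms of the weighted degree $D$ of $p$ is explicit, namely any $k > D/2$ works.
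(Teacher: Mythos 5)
Your proof is correct, but it takes a genuinely different route from the paper's. You exploit the dilation structure: grade $\cP$ by the weighted (homogeneous) degree, observe that $\sL$ is homogeneous of degree $2$ in the sense $\sL(f\circ\gamma_a)=a^2(\sL f)\circ\gamma_a$ (a fact the paper records just before the lemma), and conclude that $\sL$ maps the $a^d$-eigenspace of $f\mapsto f\circ\gamma_a$ into the $a^{d-2}$-eigenspace, hence $\sL^k p=0$ once $2k$ exceeds the weighted degree of $p$. The one point you rightly flag --- that $\sL p$ is again a polynomial so that the eigenspace identification applies --- is covered by the paper's remark that $\mathbb{D}(G)$ sits inside the Weyl algebra $\CC[x_1,\ldots,x_n,\partial_1,\ldots,\partial_n]$. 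The paper instead argues via nilpotency alone: writing $p=P\circ\log$, it restricts $P$ to the curves $t\mapsto\log(\exp X\exp(tX_j))$, uses the Campbell--Hausdorff formula to bound the $t$-degree of these restrictions by some $m$ uniformly in $X$, and kills $p$ by taking $k>m/2$ iterated second derivatives at $t=0$. Your version buys an explicit and sharper bound ($k>D/2$ with $D$ the weighted degree) and reuses machinery already set up for the stratified case; the paper's version is more robust in that it needs only nilpotency of $G$, not the existence of dilations, though in the present context that extra generality is not used.
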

\begin{proof} Let $P\in \mathbb{C}[\fg]$ be such that $p(\exp (X))=P(X)$. Define
\[q_X(t_1,\ldots ,t_{n_1}):= \sum_{j=1}^{n_1} P(\log (\exp (X)\exp (t_jX_j)))\, .\]
As $G$ is nilpotent, then there exists $m$ such that $q_X$ is a polynomial of degree $\le m$ for all $X\in \fg$. But then
\[\sL^kp (\exp X)
= %\left.
  \left(
  - \sum_{j=1}^{n_1}\dfrac{\partial^2}{\partial t_j^2}
\right)^k%\right|_{t_j=0}
q_X(0)=0\]
for all $k> m/2$ and all  $X\in\fg$.
\end{proof}

If $\widehat{\eta}\in \cS (\RR^+)$ and $\eta$ is the corresponding
kernel function, then we say that $\eta$ is \emph{band-limited} if $\widehat{\eta}$ is compactly supported. In
that case  
there exist  $\epsilon, \delta>0$ such that $\supp (\widehat{\eta})\subseteq [\epsilon, \delta]$.

 \begin{lemma}\label{all-vanishing-moments} Let  $\widehat \phi \in
  \cS (\RR^+)$ with $\widehat \phi \equiv 0$ in some neighborhood of zero and let $\epsilon,\delta>0$. Then
the distribution kernel $\phi $ of $\widehat \phi (\sL)$ lies in $\cSZ$,
and for each $m\in\NN$ there
exists $\widehat \phi_m\in \cS (\RR^+)$ such that $\phi =\sL^m\phi_m$. Furthermore, $\supp (\widehat \phi_m) \subseteq [\epsilon ,\delta]$ if and only if $\supp (\widehat \phi ) \subseteq
 [\epsilon ,\delta ]$.
   \end{lemma}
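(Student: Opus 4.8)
The plan is to exhibit an explicit choice of $\widehat{\phi}_m$ and then read off the three assertions. Fix $a>0$ with $\widehat{\phi}\equiv 0$ on $[0,a]$ and put $\widehat{\phi}_m(\lambda):=\lambda^{-m}\widehat{\phi}(\lambda)$, understood to be $0$ on $[0,a]$. First I would check that $\widehat{\phi}_m\in\cS(\RR^+)$: it is smooth, since $\widehat{\phi}$ vanishes on $[0,a]$ and $\lambda\mapsto\lambda^{-m}$ is smooth on $[a,\infty)$, and it is rapidly decreasing, since $\lambda^{-m}$ and all of its derivatives are bounded on $[a,\infty)$, so the Leibniz rule bounds the Schwartz seminorms of $\widehat{\phi}_m$ by those of $\widehat{\phi}$. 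Let $\phi_m\in\cSG$ be the distribution kernel of $\widehat{\phi}_m(\sL)$ supplied by Theorem~\ref{th-Hulan}; note that $\phi=\phi_0\in\cSG$ by the same theorem, so only the vanishing-moment property of $\phi$ remains to be proved.

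Next I would verify $\phi=\sL^m\phi_m$. Since $\lambda^m\widehat{\phi}_m(\lambda)=\widehat{\phi}(\lambda)$ is bounded, the spectral calculus gives $\widehat{\phi}(\sL)=\sL^m\widehat{\phi}_m(\sL)$ (the range of $\widehat{\phi}_m(\sL)$ lies in the domain of $\sL^m$ because $\lambda^m\widehat{\phi}_m$ is bounded), so for $f\in\cSG$, using that the left-invariant operator $\sL$ satisfies $\sL(f*g)=f*(\sL g)$,
\[
f*\phi=\widehat{\phi}(\sL)f=\sL^m\bigl(\widehat{\phi}_m(\sL)f\bigr)=\sL^m(f*\phi_m)=f*(\sL^m\phi_m),
\]
a chain of Schwartz functions. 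Convolving with an approximate identity drawn from $C_c^\infty(G)\subseteq\cSG$ then forces $\phi=\sL^m\phi_m$. Since the kernel map $\widehat{\psi}\mapsto\psi$ is injective (because $\|\widehat{\psi}(\sL)\|=\|\widehat{\psi}\|_\infty$), the same computation run backwards shows $\widehat{\phi}_m$ is uniquely determined, so the last assertion is unambiguous; and since $\lambda^{-m}$ has no zero on $(0,\infty)$ while both $\widehat{\phi}_m$ and $\widehat{\phi}$ vanish at $0$, we get $\supp(\widehat{\phi}_m)=\supp(\widehat{\phi})$, from which the ``if and only if'' is immediate for any $[\epsilon,\delta]$.

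For the vanishing moments I would fix a multi-index $\alpha\in\NN^n$, use Lemma~\ref{le-LkpZero} to choose $m$ with $\sL^m(x^\alpha)=0$, and integrate $\phi=\sL^m\phi_m$ by parts against the polynomial $x^\alpha$:
\[
\int_G x^\alpha\,\phi(x)\,dx=\int_G x^\alpha\,(\sL^m\phi_m)(x)\,dx=\int_G (\sL^m x^\alpha)\,\phi_m(x)\,dx=0 .
\]
Since $\alpha$ was arbitrary this gives $\phi\in\cSZ$.

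The only step I expect to require genuine care — the main obstacle — is the second equality in the last display. The excerpt states the formal self-adjointness $\int_G(\sL g)h=\int_G g(\sL h)$ only for $g,h\in C_c^\infty(G)$, so I would extend it to $g\in\cSG$ and $h$ a polynomial by approximating $g$ in $\cSG$ by $C_c^\infty(G)$ functions and using that the Schwartz decay of $g$ (and of $\sL g$) dominates the polynomial growth of $h$ and $\sL h$, so both sides pass to the limit; iterating yields the $\sL^m$ version. Along the way one also has to keep straight the two meanings of $\sL^m$ — the self-adjoint operator on $L^2(G)$ and the polynomial-coefficient differential operator on $C^\infty(G)$ — but these coincide on the Schwartz functions occurring above, so the identification is harmless.
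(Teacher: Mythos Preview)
Your proof is correct and follows essentially the same route as the paper: define $\widehat{\phi}_m(\lambda)=\lambda^{-m}\widehat{\phi}(\lambda)$, deduce $\phi=\sL^m\phi_m$ from the spectral calculus and Hulanicki's theorem, and then kill the moments by integrating by parts against the polynomial, using that $\sL^m$ annihilates polynomials of low enough degree (Lemma~\ref{le-LkpZero}). The paper is terser---it asserts $\phi=\sL^m\phi_m$ without the approximate-identity step and declares the support statement ``obvious''---while you supply justifications for the Schwartz-class membership of $\widehat{\phi}_m$, the passage from $f*\phi=f*\sL^m\phi_m$ to $\phi=\sL^m\phi_m$, and the integration by parts against polynomials; but the skeleton is the same.
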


\begin{proof} Define $\widehat{\phi}_m (s) := s^{-m}\widehat{\phi}(s)$. Then  $\widehat{\phi}(\sL)=\sL^m\widehat\phi_m(\sL)$ and
by Theorem \ref{th-Hulan} and our assumption
on $\phi$
we have $\phi_m\in \cSG$ for all $m\in\NN$    and $\phi=\sL^m \phi_m$. Let $p$ be any polynomial  on $G$ of degree $k\leq 2m-1$. Then
 $\sL^mp=0$ and we have
\[
\int_G p(x) \phi (x)\, d\mu (x) = \int_G p(x)\sL^m\phi_m(x)\, d\mu (x)=\int_G \sL^mp(x)\phi_m(x)\, d\mu (x)=0\]
Thus $\phi $ has vanishing moments of arbitrary order. The last statement is obvious.
\end{proof}

If $\widehat{\varphi}\in \cS(\RR^+)$ and 
$\varphi$ has vanishing moments of all orders, then
$p\ast \varphi=0$ for all $p\in \cP$. Hence
$(f+\cP)\mapsto f*\varphi$ is a well defined linear map on
$\cSZd$.

\subsection{Besov spaces}

To define Besov spaces on an abstract stratified group $G$ we begin as
follows: Pick a multiplier $\widehat\psi\in C_c^\infty(\RR^+)$ with
support in $[2^{-2}, 2^{2}]$.  Denote by $\psi$  the distribution
kernel of the operator $\widehat\psi(\sL)$  as before.
Define
$\widehat\psi_j(\lambda ) := \widehat\psi(2^{-2j}\lambda)$. We assume that
 \begin{align}\label{unity-partition} 
\sum_{j\in \ZZ}
|\widehat\psi_j(\lambda)|^2=1~~~ a.e ~~ \lambda \in \RR^+ ~.
 \end{align}
\begin{lemma}\label{eq-psij} 
For $\widehat{\psi}\in\cS(\RR^+)$ and $r>0$. The distribution kernel $\psi^r$ corresponding
 to the operator $\widehat{\psi}(r\sL)$ is $\psi^r(x)= r^{-Q/2}\psi (r^{-1/2}x)$.
In particular,  let  $\psi_j\in \cS(G)$
denote the distribution kernel of the
operator $\widehat\psi_j(\sL) $. Then $
\psi_j (x)= 2^{jQ}\psi(2^jx)$.
\end{lemma}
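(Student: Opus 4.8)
The plan is to obtain $\psi^r$ from $\psi$ by exploiting the degree-two homogeneity of $\sL$: conjugating $\sL$ by a dilation rescales it by a power, so conjugating $\widehat\psi(\sL)$ by that dilation produces $\widehat\psi(r\sL)$, while conjugating a convolution operator by a dilation merely dilates its kernel. For $a>0$ set $D_af := f\circ\gamma_a$, so that $D_af(x)=f(ax)$ and $D_a^{-1}=D_{1/a}$. The substitution rule $\int_G f(ax)\,dx = a^{-Q}\int_G f(x)\,dx$ shows that $\delta_a := a^{Q/2}D_a$ is unitary on $L^2(G)$, with $\delta_a^{-1}=\delta_{1/a}$.

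I would first record two elementary identities. Since $\gamma_a$ is a group automorphism that rescales Haar measure by $a^{-Q}$, a change of variables gives
\begin{equation*}
  (D_af)\ast(D_ag) = a^{-Q}\,D_a(f\ast g),\qquad f,g\in\cSG .
\end{equation*}
Next, $\sL$ is homogeneous of degree $2$, that is $\sL(f\circ\gamma_a)=a^2(\sL f)\circ\gamma_a$; multiplying by the scalar $a^{Q/2}$ this says $\sL\,\delta_a = a^2\,\delta_a\,\sL$, hence $\delta_a^{-1}\sL\delta_a = a^2\sL$. Because $\delta_a$ is unitary, conjugation transports the projection-valued measure of $\sL$ to that of $a^2\sL$, so $\delta_a^{-1}\,\widehat\psi(\sL)\,\delta_a = \widehat\psi(a^2\sL)$ for every bounded Borel function $\widehat\psi$. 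Taking $a=\sqrt r$, the scalar factors in $\delta_{\sqrt r}^{-1}(\cdots)\delta_{\sqrt r}$ cancel and we obtain
\begin{equation*}
  \widehat\psi(r\sL) = D_{1/\sqrt r}\,\widehat\psi(\sL)\,D_{\sqrt r}.
\end{equation*}

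Now I would combine the two facts. Since $\widehat\psi(r\,\cdot)\in\cS(\RR^+)$, Theorem~\ref{th-Hulan} guarantees that $\widehat\psi(r\sL)$ has a kernel $\psi^r\in\cSG$, uniquely determined by $\widehat\psi(r\sL)f=f\ast\psi^r$ for $f\in\cSG$. On the other hand, for $f\in\cSG$ we have $D_{\sqrt r}f\in\cSG$, and writing $\psi = D_{\sqrt r}\big(D_{1/\sqrt r}\psi\big)$ and applying the convolution identity with $a=\sqrt r$,
\begin{equation*}
  \widehat\psi(r\sL)f = D_{1/\sqrt r}\big((D_{\sqrt r}f)\ast\psi\big) = r^{-Q/2}\,D_{1/\sqrt r}D_{\sqrt r}\big(f\ast(D_{1/\sqrt r}\psi)\big) = r^{-Q/2}\,f\ast(D_{1/\sqrt r}\psi).
\end{equation*}
Comparing the two expressions and invoking uniqueness of the distribution kernel gives $\psi^r = r^{-Q/2}D_{1/\sqrt r}\psi$, i.e. $\psi^r(x)=r^{-Q/2}\psi(r^{-1/2}x)$. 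The last assertion then follows by taking $r=2^{-2j}$, so that $r^{-Q/2}=2^{jQ}$ and $r^{-1/2}=2^{j}$, whence $\psi_j(x)=2^{jQ}\psi(2^jx)$.

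The only genuinely delicate point is the passage from the algebraic relation $\delta_a^{-1}\sL\delta_a=a^2\sL$ to the operator identity $\delta_a^{-1}\widehat\psi(\sL)\delta_a=\widehat\psi(a^2\sL)$ for the unbounded self-adjoint operator $\sL$; using the unitary $\delta_a$ rather than the merely bounded $D_a$ at this step keeps the spectral-calculus argument clean. Everything else is bookkeeping of the exponents $\pm Q/2$ and the factors $\sqrt r$.
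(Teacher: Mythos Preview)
Your argument is correct and is precisely the approach the paper has in mind: the paper does not spell out a proof but simply says the result ``follows easily from the homogeneity of $\sL$'' and points to \cite{FollandStein82}, Lemma~6.29, for the general bounded case, so you have supplied exactly the details that were omitted. One small caution: your local notation $D_af=f\circ\gamma_a$ clashes with the paper's later definition $D_af(x)=a^{-Q/2}f(a^{-1}x)$ in Section~\ref{se-BSC}, so if this proof is to be inserted into the paper it would be wise to rename your dilation map.
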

This follows easily from the homogeneity of $\sL$. The result is also
true for bounded $\widehat\psi$ on $\RR^+$. For a proof see
\cite{FollandStein82} Lemma 6.29. 
Equation (\ref{unity-partition}) implies  that
\begin{align*}%\label{resolution-identity} 
\sum_{j\in \ZZ}
\widehat\psi_j(\sL)^\ast \circ \widehat\psi_j(\sL)=I~,
\end{align*}
where the sum converges in the strong $L^2$-operator topology.
By (\ref{eq-Hulan}) we therefore get  the following Calder\'{o}n decomposition for all $g\in
L^2(G)$:
\begin{align}\label{decomp-in-L2} 
g=\sum_{j\in \ZZ} g\ast \psi_j \ast
\psi_j^*
\end{align} where the series converges unconditionally in
$L^2$-norm. 
For $g\in \cS_0(G)$,
the expansion (\ref{decomp-in-L2}) also converges in $\cSZ$ in the Schwartz space topology (\cite{FuMa}, Lemma 3.7). Therefore by  duality, for $\cSZd$ in the weak $^*$ topology. 

Assume that (\ref{unity-partition}) holds,
and define the \emph{homogeneous Besov space}
${\dot B}_{p,q}^{s}(G):={\dot B}_{p,q}^{s}$ for $s\in \RR$, $1\leq p \le \infty$
and $1 \le q < \infty$,
to be the set of all $f\in \cSZd$ for
which
 \begin{align}\label{norm-besov-space} \|f\|_{{\dot B}_{p,q}^{s}}:=
\left( \sum_{j\in \ZZ} 2^{jsq} \| f \ast \psi_j \|_p^q \right)^{1/q}<
\infty~,
 \end{align} with
 the natural convention for when $p=\infty$ or $q=\infty$. (Note that  the convolution with a distribution is understood in the weak sense.)
 Here we collect some of   properties of the  homogeneous Besov spaces
in the following lemma. For proofs we refer to \cite{FuMa}:

 \begin{lemma}\label{equivalency-with-heat-kernel} The following holds:
\begin{enumerate}
\item The Besov spaces ${\dot B}_{p,q}^{s}$ with norm  $ \|\cdot\|_{{\dot B}_{p,q}^{s}}$ are Banach spaces.
 \item  For all $1 \le p,q \le \infty$ and all $s \in \RR$, one has
  continuous inclusion maps $\cSZ \hookrightarrow {\dot
    B}_{p,q}^s \hookrightarrow \cSZd$. And for any $1\leq p,q<\infty$ and   $k\in \NN_0$,  $\sL^k\cSZ$  is  dense in
${\dot B}_{p,q}^s$.
\item   For any given $1 \le p,q < \infty$ the
  decomposition (\ref{decomp-in-L2}) converges for all
  $g \in {\dot B}_{p,q}^s$ in the Besov space norm.
\item   For any
   $k\in \NN$, $|s|<2k$, and any $f\in \cSZd$
   \begin{align}
     \label{asympto_rel} \| f \|_{{\dot B}_{p,q}^s} \asymp
     \left( \int_0^\infty t^{-sq/2 } \|(t\sL)^ke^{-t\sL}f \|_p^q \frac{dt}{t}
     \right)^{1/q}~.
   \end{align}
\end{enumerate}
 \end{lemma}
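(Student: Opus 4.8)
The plan is to follow the classical Littlewood--Paley template for homogeneous Besov spaces, with Hulanicki's Theorem~\ref{th-Hulan} and the scaling identity Lemma~\ref{eq-psij} playing the roles of the Euclidean Fourier transform and dilations. Every estimate rests on \emph{spectral almost-orthogonality}: since $\widehat\psi$ is supported in $[2^{-2},2^{2}]$, the multiplier $\widehat\psi_{j}$ is supported in $[2^{2j-2},2^{2j+2}]$, so $\widehat\psi_{j}\widehat\psi_{k}\equiv0$ and $\overline{\widehat\psi_{j}}\,\widehat\psi_{k}\equiv0$ whenever $|j-k|\geq3$, whence $\psi_{j}\ast\psi_{k}=\psi_{j}\ast\psi_{k}^{*}=0$ for $|j-k|\geq3$. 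Moreover, by Lemma~\ref{eq-psij} every kernel entering the argument is a dilate of a \emph{fixed} Schwartz function, hence has a uniformly controlled $L^{1}$-norm, the off-diagonal ones decaying because of the vanishing moments supplied by Lemma~\ref{all-vanishing-moments}. Throughout I write $A_{j}:=\|(2^{-2j}\sL)^{k}e^{-2^{-2j}\sL}f\|_{p}$ and $D_{j}:=\|f\ast\psi_{j}\|_{p}$.

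Parts (a)--(c) are the ``soft'' part. Let $\ell^{q}_{s}$ be the space of sequences $(g_{j})_{j\in\ZZ}$ of functions on $G$ with norm $\big(\sum_{j}2^{jsq}\|g_{j}\|_{p}^{q}\big)^{1/q}<\infty$. By definition $f\mapsto(f\ast\psi_{j})_{j}$ is an isometry of ${\dot B}^{s}_{p,q}$ into $\ell^{q}_{s}$, while $(g_{j})_{j}\mapsto\sum_{j}g_{j}\ast\psi_{j}^{*}$ is bounded from $\ell^{q}_{s}$ into ${\dot B}^{s}_{p,q}$ by the almost-orthogonality bounds together with Young's inequality on $G$ and the discrete Young inequality on $\ZZ$; by (\ref{decomp-in-L2}) it is a left inverse, so ${\dot B}^{s}_{p,q}$ is a retract of $\ell^{q}_{s}$ and hence complete, once one knows the reconstruction lands in $\cSZd$. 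That fact and the inclusion ${\dot B}^{s}_{p,q}\hookrightarrow\cSZd$ both follow by pairing through $\langle f\ast\psi_{j},\varphi\rangle=\langle f,\varphi\ast\psi_{j}\rangle$ and summing with H\"older in $j$, using that $\|\varphi\ast\psi_{j}\|_{p'}$ decays geometrically in $|j|$ for $\varphi\in\cSZ$ (its vanishing moments control $j\to-\infty$, its smoothness controls $j\to+\infty$); the mirror estimate on $D_{j}$ for $f\in\cSZ$ gives $\cSZ\hookrightarrow{\dot B}^{s}_{p,q}$. For (c), the truncations $S_{N}f:=\sum_{|j|\leq N}f\ast\psi_{j}\ast\psi_{j}^{*}$ satisfy $(f-S_{N}f)\ast\psi_{k}=0$ for $|k|\leq N-2$ and $\|(f-S_{N}f)\ast\psi_{k}\|_{p}\lesssim\sum_{|j-k|\leq2,\,|j|>N}\|f\ast\psi_{j}\|_{p}$ otherwise, so $\|f-S_{N}f\|_{{\dot B}^{s}_{p,q}}^{q}\lesssim\sum_{|j|>N-2}2^{jsq}\|f\ast\psi_{j}\|_{p}^{q}\to0$ (here $q<\infty$). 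Combined with the inclusions this yields density of $\cSZ$, and then of $\sL^{k}\cSZ$: by Lemma~\ref{all-vanishing-moments} each band-limited $\psi_{j}^{*}$ equals $\sL^{k}\rho_{j}$ for some band-limited $\rho_{j}\in\cSZ$, so $f\ast\psi_{j}\ast\psi_{j}^{*}=\sL^{k}(f\ast\psi_{j}\ast\rho_{j})$ with $f\ast\psi_{j}\ast\rho_{j}\in\cSZ$, and finite partial sums of (\ref{decomp-in-L2}) already lie in $\sL^{k}\cSZ$.

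The substance is (d). I would compare both sides of (\ref{asympto_rel}) to the discrete quantity $H(f)^{q}:=\sum_{j\in\ZZ}2^{jsq}A_{j}^{q}$. First, the integral in (\ref{asympto_rel}) is $\asymp H(f)^{q}$: split $(0,\infty)$ into the blocks $t\in[2^{-2j-2},2^{-2j}]$, on which $t^{-sq/2}\asymp2^{jsq}$, and pass between $(t\sL)^{k}e^{-t\sL}$ and $(2^{-2j}\sL)^{k}e^{-2^{-2j}\sL}$ through the multiplier $(2^{2j}t)^{k}e^{-(t-2^{-2j})\lambda}$, whose convolution kernel is a constant multiple of a heat kernel and hence has $L^{1}$-norm $\leq2^{k}$ when $t\leq2^{-2j}$; a one-sided bound of this kind on each block gives the equivalence. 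Next, $\|f\|_{{\dot B}^{s}_{p,q}}\lesssim H(f)$: on $[2^{2j-2},2^{2j+2}]$ the factor $(2^{-2j}\lambda)^{k}e^{-2^{-2j}\lambda}$ is bounded above and below by positive constants, so $\widehat\psi_{j}(\lambda)=\widehat{n}(2^{-2j}\lambda)\,(2^{-2j}\lambda)^{k}e^{-2^{-2j}\lambda}$ with $\widehat{n}(\mu):=\widehat\psi(\mu)\,\mu^{-k}e^{\mu}\in C_{c}^{\infty}(\RR^{+})$; by Lemma~\ref{eq-psij} the kernel $m_{j}$ of $\widehat{n}(2^{-2j}\sL)$ is the dilate $2^{jQ}n(2^{j}\cdot)$, so $\|m_{j}\|_{1}=\|n\|_{1}$ and $f\ast\psi_{j}=\big((2^{-2j}\sL)^{k}e^{-2^{-2j}\sL}f\big)\ast m_{j}$, giving $D_{j}\leq\|n\|_{1}A_{j}$. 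Finally, $H(f)\lesssim\|f\|_{{\dot B}^{s}_{p,q}}$: insert the Calder\'{o}n formula (\ref{decomp-in-L2}), which converges in ${\dot B}^{s}_{p,q}$ by (c), to write $(2^{-2j}\sL)^{k}e^{-2^{-2j}\sL}f=\sum_{l}(f\ast\psi_{l})\ast b_{j,l}$ with $b_{j,l}$ the kernel of $(2^{-2j}\lambda)^{k}e^{-2^{-2j}\lambda}\,\overline{\widehat\psi_{l}(\lambda)}$; by Lemma~\ref{eq-psij} this is the dilate $2^{lQ}\beta_{l-j}(2^{l}\cdot)$ of the kernel $\beta_{r}$ of $\mu\mapsto(2^{2r}\mu)^{k}e^{-2^{2r}\mu}\,\overline{\widehat\psi(\mu)}$, and reading off the behaviour on $\supp\widehat\psi$ one gets $\|\beta_{r}\|_{1}\lesssim2^{2rk}$ for $r\leq0$ and $\|\beta_{r}\|_{1}\lesssim C_{M}2^{-Mr}$ for every $M$ (in fact super-exponentially small) for $r\geq0$. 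Hence $A_{j}\leq\sum_{r}\|\beta_{r}\|_{1}D_{j+r}$, and the discrete Young inequality yields $H(f)\lesssim\big(\sum_{r}2^{-rs}\|\beta_{r}\|_{1}\big)\|f\|_{{\dot B}^{s}_{p,q}}$.

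The hard part is this very last step, and it is exactly where the hypothesis $|s|<2k$ is needed. The tail $r\to+\infty$ of $\sum_{r}2^{-rs}\|\beta_{r}\|_{1}$ converges for every $s$ because $\|\beta_{r}\|_{1}$ decays faster than any power, but the tail $r\to-\infty$ contributes $\sum_{m>0}2^{m(s-2k)}$, which converges precisely when $s<2k$ --- the reflection of the fact that $(t\sL)^{k}e^{-t\sL}$ carries only finitely many vanishing moments. The complementary bound $s>-2k$ is forced at an earlier point: for $f\in\cSZd\simeq\cSd/\cP$ the expression $(2^{-2j}\sL)^{k}e^{-2^{-2j}\sL}f$, hence $A_{j}$, is a genuinely well-defined $L^{p}$ quantity only when the residual polynomial ambiguity vanishes, and $(t\sL)^{k}e^{-t\sL}$ annihilates exactly the polynomials of homogeneous degree $<2k$, so outside $|s|<2k$ one cannot canonically realize the elements of ${\dot B}^{s}_{p,q}$ for which the estimate is asserted. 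Everything else is bookkeeping: the convolution identities above are first checked for $f\in\cSZ$, where they are classical, and then promoted to ${\dot B}^{s}_{p,q}$ by the density statements in (b)--(c) together with the uniform $L^{1}$-bounds on the kernels $m_{j}$ and $b_{j,l}$, which make all the operators involved bounded on ${\dot B}^{s}_{p,q}$ and $\ell^{q}_{s}$.
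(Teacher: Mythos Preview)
The paper does not prove this lemma at all: the sentence immediately preceding it reads ``For proofs we refer to \cite{FuMa}'', and no argument is given. So there is no in-paper proof to compare against; what you have written is an independent reconstruction of results quoted from \cite{FuMa}. Your template (almost-orthogonality, discrete Young on $\ZZ$, Hulanicki kernels as dilates) is exactly the machinery the paper itself deploys in the proof of Theorem~\ref{norm-equi-general}, so in spirit you are aligned with how the authors handle the analogous compactly supported case.

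Two places in your sketch are genuinely incomplete. First, in the density step you pass from $S_Nf\to f$ in ${\dot B}^s_{p,q}$ to ``density of $\cSZ$'' and then assert $f\ast\psi_j\ast\rho_j\in\cSZ$. For a general $f\in{\dot B}^s_{p,q}\subset\cSZd$ the function $f\ast\psi_j$ lies in $L^p$ but has no reason to be Schwartz, so neither do the truncations $S_Nf$ nor the terms $f\ast\psi_j\ast\rho_j$. An additional smoothing/cut-off approximation of each band $f\ast\psi_j$ inside $L^p$ is required before one can conclude that $\cSZ$ (let alone $\sL^k\cSZ$) is dense; this is what \cite{FuMa} supplies and what your outline skips.

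Second, your accounting for the hypothesis $|s|<2k$ is lopsided. Your estimates in fact use only $s<2k$ (in the tail $r\to-\infty$ of $\sum_r 2^{-rs}\|\beta_r\|_1$), while the bound $D_j\le\|n\|_1 A_j$ you give for the reverse inequality needs no condition on $s$. You then attribute $s>-2k$ to well-definedness of $(t\sL)^k e^{-t\sL}f$ on $\cSZd$: the concern is real---the kernel of $\lambda^k e^{-\lambda}$ has only finitely many vanishing moments, so the action on $\cSd/\cP$ is ambiguous---but you do not actually resolve it, and it is not clear from your argument why the threshold should be exactly $-2k$ rather than something tied to the polynomial degrees killed by $\sL^k$. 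In \cite{FuMa} both constraints arise symmetrically from two discrete-Young tails (one in each direction of the equivalence), and the well-definedness is handled separately; your outline conflates these.
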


We note that  the corresponding kernel function to 
 $\widehat \psi (\lambda) =(\lambda )^k e^{-\lambda}$ is \textbf{not} in $\cSZ$ so we can not use (\ref{asympto_rel})
 for our purposes. Our aim is therefore to show that we can replace this function with a bandlimited function.

\subsection{Equivalent norms on Besov spaces}
The chief result of this section
is stated in Theorem \ref{norm-equi-general} where the Besov norm (\ref{norm-besov-space}) is described in terms of 
 band-limited  kernel functions with vanishing moments of all orders.
 
In the sequel, we will use the notation ``$\preceq$'' to mean ``$\leq$''
up to a positive constant.

\begin{lemma}\label{exchang-dilation-parameters} Let $
\widehat\varphi,~ \widehat\psi \in \cS(\RR^+)$. Then for any $r, s>0$  and  $f\in L^2(G)$
 \begin{align*} \widehat\varphi(r\sL)\widehat\psi(s\sL)f=
\widehat\varphi(rs^{-1}\sL)\widehat\psi(\sL)f= \widehat\varphi(\sL)\widehat\psi(sr^{-1}\sL)f ~.
 \end{align*}
 \end{lemma}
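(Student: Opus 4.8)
The statement is purely about the commutative functional calculus of the single self-adjoint operator $\sL$, so the plan is to reduce everything to scalar manipulations inside the spectral integral. First I would write out $\widehat\varphi(r\sL)\widehat\psi(s\sL)$ using the spectral resolution $\sL=\int_0^\infty\lambda\,dP_\lambda$ together with the multiplicativity property (1) of the correspondence $\widehat\phi\leftrightarrow\widehat\phi(\sL)$: this gives $\widehat\varphi(r\sL)\widehat\psi(s\sL)=\int_0^\infty \widehat\varphi(r\lambda)\widehat\psi(s\lambda)\,dP_\lambda$. So the whole identity comes down to the trivial pointwise equality of bounded functions on $\RR^+$,
\begin{equation*}
  \widehat\varphi(r\lambda)\widehat\psi(s\lambda)=\widehat\varphi\bigl(rs^{-1}\cdot s\lambda\bigr)\widehat\psi(s\lambda)=\widehat\varphi(\lambda')\widehat\psi(sr^{-1}\lambda'),
\end{equation*}
where in the first rewriting one keeps $\widehat\psi(s\lambda)$ and expresses $\widehat\varphi(r\lambda)$ as a function of $s\lambda$, and in the second one substitutes $\lambda'=r\lambda$. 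Feeding these back through the spectral calculus yields the three operators in the lemma.

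The one genuine point to check is that the functions appearing as arguments of the functional calculus are admissible, i.e. that $\lambda\mapsto\widehat\varphi(rs^{-1}\lambda)$ and $\lambda\mapsto\widehat\psi(sr^{-1}\lambda)$ again lie in (at least) $L^\infty(\RR^+)$ so that $\widehat\varphi(rs^{-1}\sL)$ and $\widehat\psi(sr^{-1}\sL)$ are defined as bounded operators on $L^2(G)$. Since $\widehat\varphi,\widehat\psi\in\cS(\RR^+)$ and $rs^{-1},sr^{-1}>0$, the dilated functions are still Schwartz on $\RR^+$, hence bounded, so $\|\widehat\varphi(rs^{-1}\sL)\|=\|\widehat\varphi(rs^{-1}\cdot)\|_\infty<\infty$ and similarly for the other factor; in fact, by Hulanicki's theorem (Theorem~\ref{th-Hulan}) all the operators involved are convolution operators with Schwartz kernels. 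Thus there are no domain or convergence subtleties — everything is an honest equality of bounded operators applied to $f\in L^2(G)$.

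I would organize the write-up as: (i) invoke the spectral representation and property (1) to turn each of the three operator products into a single spectral integral with a scalar symbol; (ii) observe the elementary pointwise identities among the three symbols obtained by the substitution $\lambda\mapsto r\lambda$ (respectively $\lambda\mapsto s\lambda$); (iii) conclude equality of the operators, noting via Lemma~\ref{eq-psij} or Theorem~\ref{th-Hulan} that this is consistent with the description in terms of dilated kernels $\varphi^r(x)=r^{-Q/2}\varphi(r^{-1/2}x)$. There is essentially no obstacle here; the lemma is a bookkeeping statement recording that the two dilation parameters $r,s$ in a product $\widehat\varphi(r\sL)\widehat\psi(s\sL)$ can be freely shuffled because only their ratio matters. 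The only thing to be careful about is to state the identities for the symbols in the correct order so that the substitution variable matches the factor being rewritten.
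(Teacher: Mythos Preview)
Your spectral-calculus reduction is the natural instinct, but the substitution step is where it fails. The equality $\widehat\varphi(r\lambda)\widehat\psi(s\lambda)=\widehat\varphi(rs^{-1}\cdot s\lambda)\widehat\psi(s\lambda)$ is a tautology; it does \emph{not} say that this function of $\lambda$ coincides with the symbol $\lambda\mapsto\widehat\varphi(rs^{-1}\lambda)\widehat\psi(\lambda)$ of $\widehat\varphi(rs^{-1}\sL)\widehat\psi(\sL)$, which must be evaluated at the \emph{same} spectral parameter. A change of variable $\lambda\to s\lambda$ inside $\int_0^\infty(\cdots)\,dP_\lambda$ does not produce another spectral integral for $\sL$ --- it produces one for $s^{-1}\sL$, which just returns the original operator. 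In fact the operator identity as written is false: with $\widehat\varphi(\lambda)=\widehat\psi(\lambda)=e^{-\lambda}\in\cS(\RR^+)$ one gets $\widehat\varphi(r\sL)\widehat\psi(s\sL)=e^{-(r+s)\sL}$ while $\widehat\varphi(rs^{-1}\sL)\widehat\psi(\sL)=e^{-(rs^{-1}+1)\sL}$, and these differ whenever $s\neq1$.

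What the paper's one-line reference to Lemma~\ref{eq-psij} and \eqref{eq-Hulan} actually delivers is a dilation relation on the convolution kernels. Since $\varphi^r(x)=r^{-Q/2}\varphi(r^{-1/2}x)$ and the dilations $\gamma_a$ are group automorphisms, a direct computation gives
\[
\psi^s\ast\varphi^r(x)\;=\;s^{-Q/2}\,(\psi\ast\varphi^{\,rs^{-1}})(s^{-1/2}x),
\]
so the kernels of $\widehat\varphi(r\sL)\widehat\psi(s\sL)$ and $\widehat\varphi(rs^{-1}\sL)\widehat\psi(\sL)$ agree only \emph{up to an $L^1$-isometric dilation}, not as operators. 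Every place the lemma is invoked in the proof of Theorem~\ref{norm-equi-general} immediately takes an $L^1$-norm of such a kernel, and the consequence actually needed there, namely $\|\widehat\varphi(r\sL)\psi^s\|_1=\|\widehat\varphi(rs^{-1}\sL)\psi\|_1$, follows from the displayed identity. Your write-up should target that norm identity (or the kernel identity with the explicit dilation factor); the bare operator equality cannot be rescued by the spectral substitution you outline, because it is not true.
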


\begin{proof} This follows easily from Lemma \ref{eq-psij} and  the equation (\ref{eq-Hulan}).\end{proof}

  \begin{lemma}\label{exponential estimation theorem} Let $\widehat
\varphi, \widehat\psi\in C_c^\infty(\RR^+)$ and $m\in\ZZ$. Then there exists a constant
 $c=c(m,  \varphi, \psi)$ such that
  $$
  \|  \widehat\varphi(r\sL) \psi \|_1 \leq  c r^m\quad \forall ~r>0\, .
  $$
 \end{lemma}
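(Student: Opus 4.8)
The plan is to move a power of $\lambda$ between the two multipliers so that a factor $r^m$ falls out, and then to bound what remains by Young's inequality together with the scale invariance of the $L^1$-norm of a distribution kernel.

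Fix $m\in\ZZ$ and set $\widehat a(s):=s^{-m}\widehat\varphi(s)$ and $\widehat b(\lambda):=\lambda^m\widehat\psi(\lambda)$. Since $\widehat\varphi$ and $\widehat\psi$ lie in $C_c^\infty(\RR^+)$, their supports are compact subsets of $(0,\infty)$, hence bounded away from $0$, so multiplication by the smooth function $s^{\mp m}$ keeps us inside $C_c^\infty(\RR^+)$; thus $\widehat a,\widehat b\in C_c^\infty(\RR^+)$, and by Theorem~\ref{th-Hulan} they have distribution kernels $a,b\in\cSG\subseteq L^1(G)$. From the trivial identity $1=r^{-m}\lambda^{-m}\cdot\lambda^m$ we obtain, for every $r>0$,
$$\widehat\varphi(r\lambda)\,\widehat\psi(\lambda)=r^m\,\widehat a(r\lambda)\,\widehat b(\lambda),$$
and feeding this through the multiplicativity of the functional calculus $\widehat\phi\mapsto\widehat\phi(\sL)$ gives $\widehat\varphi(r\sL)\widehat\psi(\sL)=r^m\,\widehat a(r\sL)\,\widehat b(\sL)$ on $L^2(G)$.

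Next I would read off distribution kernels on both sides. Since $\psi$ is the kernel of $\widehat\psi(\sL)$, equation (\ref{eq-Hulan}) gives $\widehat\varphi(r\sL)\psi=\psi\ast\varphi^r$, where $\varphi^r$ is the kernel of $\widehat\varphi(r\sL)$; by associativity of convolution this is precisely the distribution kernel of the operator $\widehat\varphi(r\sL)\widehat\psi(\sL)$. The same bookkeeping applied to $r^m\,\widehat a(r\sL)\,\widehat b(\sL)$ shows that its kernel is $r^m\,(b\ast a^r)$, where $a^r$ is the kernel of $\widehat a(r\sL)$. As the two operators coincide, so do their kernels: $\widehat\varphi(r\sL)\psi=r^m\,(b\ast a^r)$. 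Finally, Lemma~\ref{eq-psij} gives $a^r(x)=r^{-Q/2}a(r^{-1/2}x)$, and the change of variables $y=r^{-1/2}x$ (using $\int_G f(r^{-1/2}x)\,dx=r^{Q/2}\int_G f(x)\,dx$) yields $\|a^r\|_1=\|a\|_1$ for all $r>0$. Hence, by Young's inequality,
$$\|\widehat\varphi(r\sL)\psi\|_1=r^m\,\|b\ast a^r\|_1\le r^m\,\|b\|_1\,\|a^r\|_1=(\|a\|_1\|b\|_1)\,r^m,$$
so the lemma holds with $c:=\|a\|_{L^1(G)}\,\|b\|_{L^1(G)}$, which depends only on $m$, $\varphi$, and $\psi$.

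The argument is short, and the only place where any care is needed is the claim that the factorization returns $\widehat a,\widehat b$ to $C_c^\infty(\RR^+)$: this is exactly where one uses that $\widehat\varphi,\widehat\psi$ are compactly supported in $(0,\infty)$ rather than merely Schwartz, and it is what guarantees $a,b\in L^1(G)$ via Hulanicki's theorem. Everything else is routine bookkeeping with (\ref{eq-Hulan}), the commutativity of functions of $\sL$, and the homogeneity relation of Lemma~\ref{eq-psij}.
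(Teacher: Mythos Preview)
Your proof is correct and follows essentially the same route as the paper's: both factor a power of $\lambda$ across the two multipliers to extract $r^m$, then finish with Young's inequality and the scale invariance of the $L^1$-norm under $D_r$. The paper phrases the factorization via Lemma~\ref{all-vanishing-moments} (writing $\psi=\sL^m\psi_m$ and moving $\sL^m$ onto $\varphi^r$ by homogeneity), whereas you do the same bookkeeping directly at the level of the multipliers $\widehat a,\widehat b$; the content is identical.
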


 \begin{proof} By Lemma  \ref{all-vanishing-moments}  we have $\psi=\sL^m \psi_m$ for some
$\psi_m\in \cSZ$. Denote the distribution kernel  of $\widehat{\varphi}(r\sL)$ by
$\varphi^r\in \cSZ$. Then by the equation (\ref{eq-Hulan})
\begin{equation*}%\label{exponential estimation}
  \| \widehat\varphi (r \sL)
\psi\|_1= \| \widehat\varphi(r \sL) \sL^m\psi_m\|_1  =  \| (\sL^m\psi_m)\ast
\varphi^r \|_1 = \|\psi_m\ast (\sL^m \varphi^r)\|_1\, .
\end{equation*}

Now, Lemma \ref{eq-psij},  the homogeneity of $\sL^m$, and  Young's inequality for $p=q=1$
implies

\[ \| \widehat\varphi (r \sL)
\psi\|_1\leq \| \sL^m \varphi^r \|_1~\|\psi_m
\|_1\leq r^{m} \| \sL^{m} \varphi \|_1~
\|\psi_m  \|_1= c \ r^{m}\]
with $c= \| \sL^{m}\varphi\|_1~ \|\psi_m
\|_1$.
\end{proof}

The following is the main result of this section.
\begin{theorem}\label{norm-equi-general} Let $ \widehat\varphi\in \cS(\RR^+)$ with compact support.  Then
for any $f\in \cSZd$

 \begin{align}\label{asympto-relation} \| f \|_{{\dot B}_{p,q}^s}
\asymp \left( \int_0^\infty t^{-sq/2 } \|\widehat\varphi(t\sL)f \|_p^q
\frac{dt}{t} \right)^{1/q}~.
\end{align}
\end{theorem}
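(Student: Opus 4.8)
The plan is to derive \eqref{asympto-relation} from two one‑sided inequalities, reading each side as an element of $[0,\infty]$. Throughout I would use that $c_\varphi:=\int_0^\infty|\widehat\varphi(\lambda)|^2\,\frac{d\lambda}{\lambda}$ lies in $(0,\infty)$ — equivalently, that $\widehat\varphi$ is not identically zero; this is what makes the lower bound possible (if $\widehat\varphi\equiv0$ the right‑hand side vanishes while the left‑hand side need not) and I will take it to be implicit in the hypotheses. A change of variables gives $\int_0^\infty|\widehat\varphi(t\lambda)|^2\,\frac{dt}{t}=c_\varphi$ for all $\lambda>0$. Write $\varphi^t$ for the kernel of $\widehat\varphi(t\sL)$ (so $\varphi^t\in\cSZ$ by Lemma~\ref{all-vanishing-moments}, since $\supp\widehat\varphi$ is a compact subset of $\RR^+$), recall $f\ast\psi_j=\widehat\psi_j(\sL)f$ for $f\in\cSZd$, and set $a_j:=2^{js}\|f\ast\psi_j\|_p$ and $\omega(t):=t^{-s/2}\|\widehat\varphi(t\sL)f\|_p$; the assertion is then $\|(a_j)\|_{\ell^q(\ZZ)}\asymp\|\omega\|_{L^q(\RR^+,\,dt/t)}$, with the usual conventions when $p=\infty$ or $q=\infty$.

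First I would record two exact expansions valid on $\cSZd$. From \eqref{unity-partition}, for fixed $t$ one has $\widehat\varphi(t\lambda)=\sum_j\widehat\varphi(t\lambda)|\widehat\psi_j(\lambda)|^2$, a \emph{finite} sum because $\supp\widehat\varphi(t\cdot)$ meets only boundedly many $\supp\widehat\psi_j$; passing to kernels and convolving with $f$ gives $\widehat\varphi(t\sL)f=\sum_j(f\ast\psi_j)\ast\eta_{t,j}$, where $\eta_{t,j}$ is the kernel of $\overline{\widehat\psi_j}(\sL)\widehat\varphi(t\sL)$ and only boundedly many terms are nonzero. Dually, from $\widehat\psi_j(\lambda)=c_\varphi^{-1}\widehat\psi_j(\lambda)\int_0^\infty|\widehat\varphi(t\lambda)|^2\,\frac{dt}{t}$ and the $L^2$ spectral calculus, taking kernels and convolving with $f$ gives $f\ast\psi_j=c_\varphi^{-1}\int_0^\infty(\widehat\varphi(t\sL)f)\ast\zeta_{t,j}\,\frac{dt}{t}$, where $\zeta_{t,j}$ is the kernel of $\widehat\psi_j(\sL)\overline{\widehat\varphi}(t\sL)$. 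Both manipulations are legitimate on $\cSZd$: in the first the sum is finite, and in the second the integrand vanishes unless $t\asymp 2^{-2j}$, so the $t$‑integral runs over a $\log$‑interval of bounded length and commutes with convolution by $f$ (using continuity of $\cSd\times\cSZ\to C^\infty$, $(f,g)\mapsto f\ast g$).

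The crucial estimate is then: for every $N\in\NN$ there is $C_N$ with $\|\eta_{t,j}\|_1+\|\zeta_{t,j}\|_1\le C_N\min\bigl((t2^{2j})^{N},(t2^{2j})^{-N}\bigr)$. Writing $c:=t2^{2j}$ and using Lemma~\ref{eq-psij} (dilation of kernels preserves the $L^1$‑norm) together with the substitution $\mu=2^{-2j}\lambda$ in the spectral picture, $\|\eta_{t,j}\|_1$ depends only on $c$ and equals $\|\widehat\varphi(c\sL)\psi^\ast\|_1$, where $\psi^\ast$ is the kernel of $\overline{\widehat\psi}(\sL)$; similarly $\|\zeta_{t,j}\|_1=\|\overline{\widehat\varphi}(c\sL)\psi\|_1$. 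Since $\widehat\varphi,\overline{\widehat\varphi},\widehat\psi,\overline{\widehat\psi}$ all lie in $C_c^\infty(\RR^+)$, Lemma~\ref{exponential estimation theorem} bounds both quantities by $C_m c^{m}$ for every $m\ge0$, giving rapid decay as $c\to0$. For $c\to\infty$ I would apply Lemma~\ref{exchang-dilation-parameters} to rewrite the same kernels with the two dilation parameters interchanged — trading a dilation of $\widehat\varphi$ for a dilation of $\widehat\psi$ (or $\overline{\widehat\psi}$) at scale $c^{-1}$ — and invoke Lemma~\ref{exponential estimation theorem} once more to get the bound $C_m c^{-m}$. Note that no restriction on $s$ is produced, since a band‑limited $\widehat\varphi$ vanishes to infinite order at $0$ and $\infty$; this is the improvement over the heat‑kernel description \eqref{asympto_rel}.

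Finally I would transfer between $L^q(dt/t)$ and $\ell^q(\ZZ)$ by a dyadic cut. Substituting the key estimate into the two expansions and applying Young's inequality $\|h\ast k\|_p\le\|h\|_p\|k\|_1$, the weights recombine via $u=t2^{2j}$ into $t^{-s/2}2^{-js}=u^{-s/2}$, so that $\omega(t)\le\sum_j a_j\,\kappa(t2^{2j})$ and $a_j\le c_\varphi^{-1}\int_0^\infty\kappa(u)\,\omega(u2^{-2j})\,\frac{du}{u}$ with one and the same kernel $\kappa(u)=C_N\,u^{-s/2}\min(u^N,u^{-N})$, rapidly decaying once $N>|s|/2$. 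Partitioning $\RR^+$ into the dyadic annuli $[2^{2m},2^{2m+2})$ — each of $\frac{dt}{t}$‑length $\ln 4$ — bounds $\kappa$ on the $(m,j)$ block by the term $\kappa^\ast_{m+j}$ of a summable sequence $(\kappa^\ast_l)_l$; Hölder on one annulus compares $\int(\cdot)\,\frac{dt}{t}$ with $(\int(\cdot)^q\frac{dt}{t})^{1/q}$, and Young's inequality for sequences ($\ell^1\ast\ell^q\subseteq\ell^q$) then yields $\|\omega\|_{L^q(dt/t)}\preceq\|(a_j)\|_{\ell^q}$ from the first expansion and $\|(a_j)\|_{\ell^q}\preceq\|\omega\|_{L^q(dt/t)}$ from the second; the cases $p=\infty$, $q=\infty$ are identical with suprema in place of the relevant integrals and sums. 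I expect the main obstacle to be the estimate of the third paragraph — obtaining uniform (in $j$) rapid decay of $\|\eta_{t,j}\|_1$ and $\|\zeta_{t,j}\|_1$ in the combined scale $t2^{2j}$ at \emph{both} ends — where the interchange‑of‑dilations identity (Lemma~\ref{exchang-dilation-parameters}) together with Hulanicki's theorem, packaged in Lemma~\ref{exponential estimation theorem}, is precisely what is needed; the dyadic discretization afterwards is routine book‑keeping.
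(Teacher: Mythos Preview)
Your proposal is correct and follows essentially the same route as the paper's proof: both directions rest on the two Calder\'on reproducing formulae (the discrete one \eqref{decomp-in-L2} for $f$ and the continuous one $I=c_\varphi^{-1}\int_0^\infty\widehat\varphi(t\sL)^*\widehat\varphi(t\sL)\,\frac{dt}{t}$), followed by Young's inequality to pass to products of $L^p$ and $L^1$ norms, the dilation reduction of the $L^1$ kernel norms to a function of the single parameter $c=t\,2^{2j}$, the estimate of Lemma~\ref{exponential estimation theorem} for those norms, and finally the dyadic cut $\int_0^\infty=\int_1^4\sum_{l\in\ZZ}$ together with Young's inequality $\ell^1\ast\ell^q\subseteq\ell^q$. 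Your packaging via the unified kernels $\eta_{t,j},\zeta_{t,j}$ and the single weight $\kappa$ is a cosmetic reorganization of exactly these steps; your additional observation that the sums and integrals are in fact over a bounded $\log$-range (because both $\widehat\varphi$ and $\widehat\psi$ have compact support in $\RR^+$) is a convenient shortcut the paper does not exploit, but it does not change the argument.
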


 \begin{proof}  Fix $1\leq p\leq \infty$. We first show that  ``$\succeq$" holds.  Let
 %$ 0 \not =f\in\dot{B}_{p,q}^s$
 $0\not = f\in \cS_0^*(G)$
 and assume that the right hand side in (\ref{asympto-relation}) is finite.
 We can rewrite the integral 
 as follows. 

\begin{align*}%\label{asympto-relation-2} 
\int_0^\infty t^{-sq/2 }
\|\widehat\varphi(t\sL)f \|_p^q \frac{dt}{t} = \int_1^4 \sum_{l\in \ZZ}
(2^{2l}t)^{-sq/2} \|\widehat\varphi(2^{2l}t\sL)f\|_p^q ~\frac{dt}{t} ~.
\end{align*}
As $\widehat\varphi(2^{2l}t\sL)f$ is in $\cS_0^*(G)$ and that  
    \eqref{decomp-in-L2} converges in $\cS_0^*(G)$ in weak $^*$ topology,  we can write 
\begin{equation}\label{Young-Ineq-1}
  \|\widehat\varphi(2^{2l}t\sL)f\|_p=\left\| \sum_{j\in\ZZ}
    \widehat\varphi(2^{2l}t\sL)f\ast \psi_j \ast \psi_j^\ast \right\|_p \leq
  \sum_{j\in\ZZ} \| \widehat\varphi(2^{2l}t\sL) f\ast \psi_j \ast \psi_j^\ast \|_p~.
\end{equation}
By the left invariant property  of  the operator $\sL$ and hence $\widehat\varphi(2^{2l}t\sL)$, along with an application of Young's inequality
for $p\geq 1$ and Lemma \ref{exchang-dilation-parameters}   we get   
\begin{align}\notag \sum_j \| \widehat\varphi(2^{2l}t\sL) f\ast \psi_j
  \ast \psi_j^\ast \|_p \leq \sum_j \| \widehat\varphi(2^{2l}t\sL)
  \psi_j^\ast\|_1~ \|f \ast \psi_j \|_p= \sum_j \|
  \widehat\varphi(2^{2(l+j)}t\sL) \psi^\ast\|_1~ \|f\ast \psi_j \|_p ~.
\end{align}
By interfering the preceding in (\ref{Young-Ineq-1}) and
then multiplying both sides by $2^{-ls}$ we obtain the following.
\begin{align}\label{last}
2^{-ls} \| \widehat\varphi(2^{2l}t\sL)f\|_p
  & \leq \sum_j 2^{-ls} \| \widehat\varphi(2^{2(l+j)}t\sL) \psi_j^\ast\|_1 \|f\ast
  \psi_j \|_p\\
 &= \sum_j   \left( 2^{-(l+j)s} \|
  \widehat\varphi(2^{2(l+j)}t\sL) \psi^\ast\|_1\right) \left(2^{js} \|f\ast \psi_j \|_p\right)~.\nonumber
\end{align}
To simplify  the notations above, let $A_j^t:= 2^{js}
\| \widehat\varphi(2^{-2j}t\sL) \psi^\ast\|_1$ and $B_j:= 2^{js} \|f\ast
\psi_j \|_p$.
Using new definitions and the definition of convolution on $\ZZ$,
the summation in the inequality (\ref{last}) is a convolution on $\ZZ$. Therefore the inequality can be rewritten as
\begin{align}\notag
  2^{-ls} \| \widehat\varphi(2^{2l}t\sL)f\|_p &\leq \{B_j\}_j\ast \{A_j^t\}_j(-l).
\end{align}

Taking the sum on $l\in \ZZ$ above and then again applying Young's inequality
yields
\begin{align}\label{double-star}
\sum_j 2^{-lsq} \|
  \widehat\varphi(2^{2l}t\sL)f\|_p^q &\leq \|\{B_j\}\|_q^q \|
  \{A_j^t\}\|_1^q~.
\end{align}
 Integrating both sides of (\ref{double-star}) over $t$ in $[1,4]$,   substituting $ \{A_j^t\}$  back, and  that  $\|\{B_j\}\|_q^q= \|f\|_{B_{p,q}^s}^q$, implies
\begin{align}\label{integral}
  \int_0^\infty t^{-sq/2} \|
  \widehat\varphi(t\sL)f\|_p^q ~\frac{dt}{t}
  %&\leq \|\{B_j\}\|_q^q ~\int_1^4 \|
  %\{A_j^t\}\|_1^q~ dt/t\\
  %&=
  \leq  \|f\|_{B_{p,q}^s}^q \int_1^4
  \left(\sum_j 2^{js}\| \widehat\varphi(2^{-2j}t\sL)\psi^*\|_1\right)^q ~\frac{dt}{t}
 % ~.\nonumber
\end{align}

Therefore to complete the proof it remains to show that the sum in the preceding relation  is finite.
Without loss of generality, we shall prove this only for negative
$j$'s. For positive $j$ the assertion follows with a similar argument.

Let $m\in \ZZ$ with $m<s/2$.   By Lemma \ref{exponential estimation theorem} we have
$$\|  \widehat\varphi(2^{-2j}t\sL) \psi^\ast\|_1 \leq  c (2^{-2j}t)^m
$$
for some constant $c$ independent of $t$ and $j$.
Now it is simple to see that the application of the preceding
estimation in the integral (\ref{integral}) for the summands over
negative $j$ implies the finiteness of the integral (\ref{integral})
for $m<s/2$. For the positive $j$ we take $m>s/2$.

To show  ``$\preceq$'' holds, let $ c=\int_0^\infty
|\widehat\varphi(t)|^2 dt/t$. This is finite by the choice of
$\widehat\varphi$. By dilation invariance property of the measure
$dt/t$, for any $\lambda>0$ we can rewrite $ c=\int_0^\infty
|\widehat\varphi(\lambda t)|^2 ~\frac{dt}{t}$.  For simplicity, we assume that
$c=1$.  By substituting $\lambda\mapsto \sL$ and using the spectral
theory for the sub-Laplacian $\sL$, we derive the following Calder\`on
formula from the integral:
\begin{align}\label{identity-relation} I= \int_0^\infty
  |\widehat\varphi |^2(t\sL) ~\frac{dt}{t} = \int_0^\infty
  \widehat\varphi(t\sL)^* \widehat\varphi(t\sL) ~\frac{dt}{t}
\end{align}
where $I$ is the identity,
and the integral converges
in the strong sense. Applying (\ref{identity-relation}) to $\widehat\psi_j(\sL)f$  
  and employing
 $\widehat\psi_j(\sL)\widehat\varphi(t\sL)=\widehat\varphi(t\sL)\widehat\psi_j(\sL)$, we have
\begin{align}\label{something}
  \widehat\psi_j(\sL)f
  = \int_0^\infty
  \widehat\varphi(t\sL)^*\widehat\psi_j(\sL) \widehat\varphi(t\sL)f ~\frac{dt}{t}
 = \int_1^4 \sum_{l\in\ZZ} \widehat\varphi(2^{2l}t\sL)^*\widehat\psi_j(\sL) 
  \widehat\varphi(2^{2l}t\sL)f ~\frac{dt}{t}
\end{align}

If $\psi_j$ is the distribution kernel of $\widehat\psi_j(\sL)$, then the
distribution kernel of $\widehat\varphi(2^{2l}t\sL)^*\widehat\psi_j(\sL) $ is
$\widehat\varphi(2^{2l}t\sL)^*\psi_j $ and for any
$g\in \cSZd$ we have 
$$ \widehat\varphi(2^{2l}t\sL)^*\widehat\psi_j(\sL)g=g\ast  \widehat\varphi(2^{2l}t\sL)^*\psi_j ~ \quad\quad (\text{in the dual sense}).
$$
In particular, for $g= \widehat\varphi(2^{2l}t\sL) f $,  in
(\ref{something}) we have
\begin{align}\label{something-2} 
\widehat\psi_j(\sL)f = \int_1^4
\sum_{l\in\ZZ} \widehat\varphi(2^{2l}t\sL) f \ast \widehat\varphi(2^{2l}t\sL)^*
\psi_j~\frac{dt}{t}~.
 \end{align}

First, by applying Minkowski's inequality for integrals and series
and then Young's inequality for $p\geq 1$, from  (\ref{something-2})  we
deduce the following.

 \begin{align}\notag
 \|\widehat\psi_j(\sL) f\|_p \leq
\int_1^4 \sum_{l\in\ZZ} \|\widehat\varphi(2^{2l}t\sL) f\|_p ~ \|
\widehat\varphi(2^{2l}t\sL)^* \psi_j\|_1 ~\frac{dt}{t}~.
 \end{align}

Therefore for any $q\geq 1$
 \begin{align} \label{est_single_Delta-1}
 \|\widehat\psi_j(\sL) f\|_p^q \preceq
\int_1^4 \left(\sum_{l\in\ZZ} \|\widehat\varphi(2^{2l}t\sL) f\|_p ~ \|
\widehat\varphi(2^{2l}t\sL)^* \psi_j\|_1 \right)^q~ ~\frac{dt}{t}~.
 \end{align}

If in Lemma \ref{exchang-dilation-parameters}  we let $r=2^{2l}t$ and
$s=2^{-2j}$, then
${\widehat\varphi}(2^{2l}t\sL)^*\widehat\psi(2^{-2j}\sL)= {\widehat\varphi}(2^{2(l+j)}t\sL)^*\widehat\psi(\sL)$
and consequently
  $\widehat\varphi(2^{2l}t\sL)^* \psi_j=\widehat\varphi(2^{2(l+j)}t\sL)^*\psi$.
 Using this  in (\ref{est_single_Delta-1}),  taking sum over $j$  with weights $2^{jsq}$ and applying    Fubini's theorem yields
 \begin{align*} %\label{est_single_Delta} 
\sum_{j\in\ZZ}2^{jsq}
\|\widehat\psi_j(\sL) f\|_p^q & \leq \int_1^4 \sum_{j\in\ZZ}2^{jsq}
\left(\sum_{l\in\ZZ} \|\widehat\varphi(2^{2l}t\sL)
f\|_p~\|{\widehat\varphi}(2^{2(l+j)}t\sL)^* \psi\|_1\right)^q ~\frac{dt}{t}\\\notag & = \int_1^4 \sum_{j\in\ZZ} \left(\sum_{l\in\ZZ} 2^{-ls}
\|\widehat\varphi(2^{2l}t\sL) f\|_p ~ ~ 2^{(j+l)s}
\|{\widehat\varphi}(2^{2(l+j)}t\sL)^* \psi\|_1\right)^q ~\frac{dt}{t} ~.
 \end{align*}
The summation over $l$   is a convolution
of two sequences at $-j$.  Therefore by Young's inequality for $q$
  \begin{align}\label{irgend-etwas} 
\sum_{j\in\ZZ}2^{jsq}
\|\widehat\psi_j(\sL) f \|_p^q \leq \int_1^4 \sum_{l\in\ZZ} 2^{-lsq}
\|\widehat\varphi(2^{2l}t\sL) f\|_p^q ~ ~ \left(\sum_{l\in\ZZ} 2^{-ls}
\|\widehat\varphi(2^{-2l}t\sL)^* \psi\|_1\right)^q ~\frac{dt}{t}~.
  \end{align} Note that the left side of (\ref{irgend-etwas}) is  $\|f\|_{\dot B_{p,q}^s}^q$.  And, by a similar argument used
above, one can show that  the sum $\sum_{l\in\ZZ} 2^{-ls}\|\widehat\varphi(2^{-2l}t\sL)^*
\psi\|_1$ is finite.
With these and that $1\leq t\leq 4$, in
(\ref{irgend-etwas}) we proceed  as below.
    \begin{align*}\|f\|_{\dot B_{p,q}^s}^q&\preceq \int_1^4 \sum_{l\in\ZZ} 2^{-lsq} \|\widehat\varphi(2^{2l}t\sL)
f\|_p^q ~\frac{dt}{t} \\\notag &\preceq \int_1^4 \sum_{l\in\ZZ}
(2^{2l}t)^{-sq/2} \|\widehat\varphi(2^{2l}t\sL) f\|_p^q ~\frac{dt}{t}\\\notag &=
\int_0^\infty t^{-sq/2} \|\widehat\varphi(t\sL) f\|_p^q ~\frac{dt}{t}~.
\end{align*}
This completes the proof  of $``\preceq"$.

 Using the standard convention, the proof of the theorem for the cases
$q=\infty$  and $p=\infty$ is simple and we omit it here.
 \end{proof}
 
 We shall say  a compactly supported function  $\widehat\phi\in \mathcal S(\RR^+)$ is a cut off function on $\RR^+$ if  $\widehat\phi\equiv 1$ in an open subinterval of its support. 
The following corollary generalizes the results of Theorem 4.4 in \cite{FuMa}. 
\begin{corollary} Let $ \widehat\varphi $
  be any cut off function on $\RR^+$. Then for any $\widehat\psi\in \cS(\RR^+)$
  
   \begin{align*}%\label{asympto_rel1}
    \| f \|_{{\dot B}_{p,q}^s} \asymp
    \left( \int_0^\infty  \|\widehat\psi(t\sL)\widehat\varphi (t\sL)f \|_p^q \frac{dt}{t}\right)^{1/q}~ .
  \end{align*}
  In particular one takes $\widehat\psi(\lambda)= \lambda^k e^{-\lambda}$ for any $k\in \NN$.  
     
\end{corollary}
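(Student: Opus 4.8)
The plan is to reduce the Corollary to Theorem~\ref{norm-equi-general} by folding the cut-off into the multiplier. I would set $\widehat\eta:=\widehat\psi\,\widehat\varphi$ and first record its basic properties: since $\widehat\varphi$ is compactly supported in $\RR^+$ and $\widehat\psi\in\cS(\RR^+)$, the product $\widehat\eta$ again lies in $\cS(\RR^+)$, has compact support (with $\supp\widehat\eta\subseteq\supp\widehat\varphi$, a compact subset of $(0,\infty)$), and in particular vanishes in a neighbourhood of $0$. By multiplicativity of the functional calculus for $\sL$, for each $t>0$ one has $\widehat\psi(t\sL)\,\widehat\varphi(t\sL)=\widehat\eta(t\sL)$; on $\cSZd$ both sides are to be read as convolution with the Schwartz kernel of $\widehat\eta(t\sL)$ (Theorem~\ref{th-Hulan}), which by Lemma~\ref{all-vanishing-moments} — the part asserting kernel membership in $\cSZ$ whenever the multiplier vanishes near $0$ — makes sense exactly as in the definition of the Besov norm. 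Hence the right-hand side of the Corollary is literally the right-hand side of \eqref{asympto-relation} with $\widehat\varphi$ there replaced by $\widehat\eta$.

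It then remains only to apply Theorem~\ref{norm-equi-general} to $\widehat\eta$. Its sole hypothesis — that the multiplier be a compactly supported element of $\cS(\RR^+)$ — has just been verified; the upper half of its proof also uses implicitly that the multiplier is not identically zero (so that the constant $\int_0^\infty|\,\cdot\,|^2\,dt/t$ can be normalised to $1$), and I would note that this is automatic in the case of interest $\widehat\psi(\lambda)=\lambda^k e^{-\lambda}$, since then $\widehat\psi>0$ on $(0,\infty)$ and the cut-off $\widehat\varphi$ is not identically zero, so $\widehat\eta\not\equiv0$. Applying Theorem~\ref{norm-equi-general} to $\widehat\eta$ yields $\|f\|_{{\dot B}_{p,q}^s}\asymp\big(\int_0^\infty t^{-sq/2}\|\widehat\eta(t\sL)f\|_p^q\,dt/t\big)^{1/q}$ for all $f\in\cSZd$, which is the asserted equivalence. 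For the ``in particular'' clause one simply takes $\widehat\psi(\lambda)=\lambda^k e^{-\lambda}$, so that $\widehat\psi(t\sL)=(t\sL)^k e^{-t\sL}$; the merit of the extra factor $\widehat\varphi(t\sL)$ is that $\widehat\eta$ is now band-limited and vanishes near $0$, whence its kernel lies in $\cSZ$ — the form needed for the coorbit description — whereas the kernel of $\lambda^k e^{-\lambda}$ alone does not, as noted after Lemma~\ref{equivalency-with-heat-kernel}. This is the sense in which the statement refines Theorem~4.4 of~\cite{FuMa}.

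I do not anticipate any real obstacle: essentially all the analysis has already been carried out in Theorem~\ref{norm-equi-general}. The only points calling for a sentence of justification are the two noted above — that $\widehat\psi\widehat\varphi$ is again a compactly supported Schwartz multiplier and is not identically zero, and that the product identity $\widehat\psi(t\sL)\widehat\varphi(t\sL)=(\widehat\psi\widehat\varphi)(t\sL)$ may legitimately be read on the distribution space $\cSZd$ over which the Besov norm ranges, and not merely on $L^2(G)$.
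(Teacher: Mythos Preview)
Your approach is correct and is exactly what the paper intends: the Corollary is stated without proof, immediately after Theorem~\ref{norm-equi-general}, and is meant to follow by setting $\widehat\eta=\widehat\psi\,\widehat\varphi$ and invoking the theorem, just as you do. Note that the factor $t^{-sq/2}$ appearing in your conclusion (and in \eqref{asympto-relation} and \eqref{asympto_rel}) is missing from the displayed formula in the Corollary---this is a typo in the paper, and your version is the correct one; you were also right to flag the implicit nondegeneracy hypothesis $\widehat\eta\not\equiv 0$, which the paper does not make explicit.
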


\section{Besov spaces as coorbits}\label{se-BSC}

\subsection{Some representation theory of stratified Lie groups}
In this section we will look at some representation
theoretic results for the semidirect product $\RR^+\ltimes G$
where $\RR^+$ acts on $G$ by dilations.
For any $a>0$ and any function
$f$ on ${G}$, define the
dilation of $f$ as follows
\begin{align*}%\label{dilation}
D_a f (x)= a^{-Q/2} f(a^{-1}x), %\quad \quad (x\in G)
\end{align*}
 where $Q$ is the homogenous degree of $G$.
Let $\pi$ denote the quasi-regular representation of the
semidirect product $\RR^+\ltimes G$
defined by
\begin{equation}
  \label{eq:repn}
  \pi(a,x)f=\ell_xD_a f, \ ~~ a>0, \ x\in G
\end{equation}
for all $f\in L^2(G)$.
The representation $\pi$ induces a family of infinitesimal operators
$\pi^\infty(X)$ for $X$ in the Lie algebra of $\RR^+\ltimes G$
\begin{equation*}
  \pi^\infty(X)f = \lim_{t\to 0} \frac{\pi(\exp(tX))f-f}{t},
\end{equation*}
with limit in $L^2(G)$. We
use $\pi^\infty$ to separate this from the
weak derivatives we have used before, yet
note that a strongly smooth function in $\cSZ$ is also
both weak and weak$^*$ differentiable.
\begin{lemma}\label{lem:repn}
  $(\pi, \cS_0(G))$ and
  $(\pi^\infty,\cS_0(G))$ are representations
    of $\RR^+\ltimes G$ and its  Lie algebra respectively.
\end{lemma}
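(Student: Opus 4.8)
The plan is to verify the three standard axioms of a representation for the quasi-regular action $\pi$ on $\cSZ$, and then to transfer these to the infinitesimal level. First I would check that $\pi$ maps $\RR^+\ltimes G$ into the group of bounded operators on $\cSZ$; since $\pi(a,x) = \ell_x D_a$, it suffices to know that both left translation $\ell_x$ and dilation $D_a$ preserve $\cSZ$ and act continuously on it. Preservation of the Schwartz space $\cSG$ under $\ell_x$ and $D_a$ is classical (see \cite{FollandStein82}); one then observes that dilation and left translation both preserve the vanishing-moment condition defining $\cSZ$ — for dilation this is immediate from the change of variables $\int_G x^\alpha D_af(x)\,dx = a^{Q/2+|\alpha|_h}\int_G x^\alpha f(x)\,dx$ (where $|\alpha|_h$ is the homogeneous length), and for left translation one expands $(\ell_xf)(y) = f(x^{-1}y)$ in the polynomial coordinates, using that $\cP$ is closed under left translation so that the moments of $\ell_xf$ are finite linear combinations of moments of $f$, all of which vanish. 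Continuity on $\cSZ$ then follows from continuity on $\cSG$ together with the fact that $\cSZ$ carries the subspace topology.

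Next I would check the homomorphism property $\pi(a,x)\pi(b,y) = \pi((a,x)(b,y))$. Writing out the semidirect product law $(a,x)(b,y) = (ab, x\cdot\gamma_a(y)) = (ab, x(ay))$ (with $ay := \gamma_a(y)$), one computes directly that $\ell_x D_a \ell_y D_b = \ell_x \ell_{ay} D_a D_b = \ell_{x(ay)} D_{ab}$, using the intertwining relation $D_a\ell_y = \ell_{\gamma_a(y)}D_a$, which is itself an elementary change of variables. Together with $\pi(1,e) = \mathrm{id}$ this establishes that $\pi$ is a homomorphism into $GL(\cSZ)$, and joint strong continuity on $\cSZ$ follows from strong continuity of $(a,x)\mapsto \ell_xD_af$ in $\cSG$ (standard) restricted to the closed subspace $\cSZ$. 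This gives the first assertion, that $(\pi,\cSZ)$ is a representation.

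For the infinitesimal statement, I would use that every $f\in\cSZ$ is a strongly smooth vector: for $X$ in the Lie algebra of $\RR^+\ltimes G$, the difference quotient $t^{-1}(\pi(\exp tX)f - f)$ converges in $\cSG$ (hence in $\cSZ$, once we know the limit lies in $\cSZ$) to a limit $\pi^\infty(X)f$. The fact that $\pi^\infty(X)$ again maps $\cSZ$ into $\cSZ$ follows because $\pi^\infty(X)$ is a left-invariant-plus-Euler-type differential operator — a combination of the $R(X_i)$ and the homogeneous Euler operator $\sum x_i\partial_i + Q/2$ — each summand of which visibly preserves both the Schwartz decay and the vanishing moments (differential operators lower moment orders or leave them as moments of the same function). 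That $\pi^\infty$ is a Lie-algebra homomorphism, i.e.\ $\pi^\infty([X,Y]) = \pi^\infty(X)\pi^\infty(Y) - \pi^\infty(Y)\pi^\infty(X)$ on $\cSZ$, is then the standard consequence of $\pi$ being a smooth representation and $\cSZ$ consisting of smooth vectors.

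The only genuinely delicate point — and the one I would spend the most care on — is the closure of $\cSZ$ under left translation and under the infinitesimal operators, i.e.\ the compatibility of the vanishing-moment condition with the group action; the Schwartz-space parts ($\cSG$ invariance and strong continuity) are entirely standard and can be quoted from \cite{FollandStein82}. For left translation the key observation is that $x\mapsto x^\alpha$ composed with $y\mapsto x^{-1}y$ expands, via the polynomial group law (Campbell--Hausdorff), into a polynomial in $y$ of the same or lower homogeneous degree with coefficients depending on $x$, so that $\int_G x^\alpha(\ell_xf)(y)\,dy$ is a finite linear combination of moments $\int_G y^\beta f(y)\,dy$, all vanishing; for $\pi^\infty(X)$ one argues similarly, or simply notes $\pi^\infty(X)f$ is a $\cSG$-limit of elements of $\cSZ$ and $\cSZ$ is closed in $\cSG$. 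Modulo these bookkeeping verifications the lemma is routine.
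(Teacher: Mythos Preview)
Your proposal is correct and follows essentially the same route as the paper: both reduce to checking that $\cSZ$ is invariant under left translation, dilation, and the associated infinitesimal operators, quoting \cite{FollandStein82} for the Schwartz-space part and arguing directly for the vanishing-moment condition. You are more systematic (explicitly verifying the homomorphism law and strong continuity, which the paper leaves implicit) and you handle $\pi^\infty$-invariance via the closedness of $\cSZ$ in $\cSG$, whereas the paper instead computes the dilation generator explicitly as $\pi^\infty(T)f = -\tfrac{1}{2}f + \sum_{|\alpha|=1} p_\alpha(x)\, R^\alpha f$ with degree-one polynomials $p_\alpha$; but these are minor variations on the same argument.
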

\begin{proof}
  By Propositions 1.46  and 1.25 in \cite{FollandStein82} we already
  know that $\cS_0(G)$ is invariant under the left translation.
  That  $\cS_0(G)$ is dilation invariant follows from the
  homogeneity of the norm $|\cdot |$.
  From the seminorms $|\cdot |_N$ it is not
  hard to see that $\cS_0(G)$ is invariant under
  left differentiation, and thus we  only need to
  show that the differentiation arising from dilations
  leaves $\cS_0(G)$ invariant.
  Let $T=(1,0)$ be in the Lie algebra of $\RR^+\ltimes G$, then
  there exists polynomials $p_\alpha$ such that
  \begin{equation*}
     \pi^\infty(T)f(x) 
     = \frac{d}{dt}\Big|_{t=0}\pi(\exp(tT))f(x)
     =\frac{d}{dt}\Big|_{t=0} e^{-t/2} f(e^{-t}x)
    = - \frac{1}{2} f(x) + \sum_{|\alpha|=1} p_\alpha(x) R^\alpha f(x),
  \end{equation*} 
  where $p_\alpha$ are polynomials of degree $1$ in $x$. 
  For this statement see p. 41 in \cite{FollandStein82} coupled with
  p. 25 in \cite{FollandStein82}. 
  This proves invariance under the infinitesimal operator
  $\pi^\infty(T)$ .
\end{proof}

The Haar measure on $\RR^+\ltimes G$ is given by
$d\mu(x,a)=a^{-(Q+1)}\,dx\,da$ where $dx$ is the Haar measure on $G$. 
The following convolution
identity is crucial to our main result.
\begin{lemma}
  \label{lem:reprformula}
  Let $u\in \cSZ$ be the distribution kernel for
 $\widehat{u}$ with support away from zero satisfying
  \begin{equation}\label{eq:1}
    \int_0^\infty \frac{|\widehat{u}(\lambda)|^2}{\lambda} \,d\lambda = 1.
  \end{equation}
  Then with convolution on $\RR^+\ltimes G$ we have
  \begin{equation*}
    W_u(\phi)*W_u(u)(a,x) = W_u(\phi)(a,x)
  \end{equation*}
  for all $\phi\in \cSZd$.
\end{lemma}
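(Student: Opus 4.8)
The plan is to make the reproducing identity transparent by translating the wavelet transform into the spectral calculus of $\sL$, which turns the convolution over $\RR^{+}\ltimes G$ into a one–parameter Calder\'on integral.

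\emph{Step 1: rewrite $W_u$.} Since $\widehat u$ vanishes near $0$ we have $u\in\cSZ$ (Lemma~\ref{all-vanishing-moments}), and $\cSZ$ is $\pi$–invariant (Lemma~\ref{lem:repn}), so $W_u(\phi)(a,x)=\dup{\phi}{\ell_x D_a u}$ is well defined for every $\phi\in\cSZd$. Using $\dup{\phi}{\ell_x g}=(\phi*g^{*})(x)$ (convolution with a distribution taken weakly), $(D_a g)^{*}=D_a(g^{*})$, and Lemmas~\ref{le_star} and~\ref{eq-psij}, one identifies
$$ W_u(\phi)(a,x)=a^{Q/2}\,\bigl[\overline{\widehat u}(a^{2}\sL)\phi\bigr](x), $$
the dilation by $a$ on $G$ producing the dilation by $a^{2}$ on $\mathrm{spec}(\sL)$ because $\sL$ is homogeneous of degree $2$. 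Specializing $\phi=u$ gives $W_u(u)(c,z)=c^{Q/2}k_c(z)$, where $k_c$ is the distribution kernel of the multiplier $\lambda\mapsto\overline{\widehat u(c^{2}\lambda)}\,\widehat u(\lambda)$.

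\emph{Step 2: compute the convolution.} On $H=\RR^{+}\ltimes G$ the group law is $(a,x)(b,y)=(ab,\,x\gamma_a(y))$, the Haar measure is $b^{-(Q+1)}\,db\,dy$, and $(b,y)^{-1}(a,x)=\bigl(b^{-1}a,\,\gamma_{b^{-1}}(y^{-1}x)\bigr)$. Inserting the two formulas of Step~1, the powers of $b$ collapse to $b^{-1}$; the $G$–integral in $y$ becomes a convolution on $G$, and after the parameter shuffling of Lemma~\ref{exchang-dilation-parameters} (applied through Lemma~\ref{eq-psij}) it equals $\bigl[\overline{\widehat u(a^{2}\cdot)}\,|\widehat u(b^{2}\cdot)|^{2}\bigr](\sL)\phi$ at $x$. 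Hence
$$ W_u(\phi)*W_u(u)(a,x)=a^{Q/2}\int_{0}^{\infty}\Bigl[\overline{\widehat u(a^{2}\cdot)}\,\bigl|\widehat u(b^{2}\cdot)\bigr|^{2}\Bigr](\sL)\phi\,(x)\,\frac{db}{b}. $$
Pulling the scalar integral inside the spectral calculus, the substitution $\mu=b^{2}\lambda$ turns $\int_{0}^{\infty}|\widehat u(b^{2}\lambda)|^{2}\,db/b$ into a fixed multiple of $\int_{0}^{\infty}|\widehat u(\mu)|^{2}\,d\mu/\mu$, which is normalized to $1$ by~(\ref{eq:1}). The surviving multiplier is then $\overline{\widehat u(a^{2}\cdot)}$, so the right-hand side is $a^{Q/2}[\overline{\widehat u}(a^{2}\sL)\phi](x)=W_u(\phi)(a,x)$. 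Conceptually this is just the statement that $\int_{H}|\pi(h)u\rangle\langle\pi(h)u|\,d\mu(h)=I$ under~(\ref{eq:1}), i.e. a resolution of the identity for $\pi$.

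\emph{Main obstacle.} The real work is justifying these manipulations for a genuine distribution $\phi\in\cSZd$ rather than an $L^{2}$– or Schwartz function: absolute convergence of the $H$–integral, and the Fubini-type interchanges of the $b$–integration with the $G$–convolution and with the spectral calculus. Since $\widehat u$ is compactly supported away from $0$, each $\overline{\widehat u}(b^{2}\sL)\phi=\phi*(u^{*})^{b^{2}}$ is a smooth function of at most polynomial growth, and — crucially — the relevant $\cSZ$–seminorms of $[\overline{\widehat u(a^{2}\cdot)}\,|\widehat u(b^{2}\cdot)|^{2}](\sL)\phi$ decay like a positive power of $\min(b,b^{-1})$, by an estimate of exactly the type proved in Lemma~\ref{exponential estimation theorem}; this forces absolute convergence and legitimizes the interchanges. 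Alternatively one proves the identity first for $\phi\in L^{2}(G)$, where everything is rigorous via the strongly convergent Calder\'on formula, and then extends to $\cSZd$ using weak-$*$ continuity of $\phi\mapsto W_u(\phi)(a,x)=\dup{\phi}{\pi(a,x)u}$ for fixed $(a,x)$ together with the density of $\sL^{k}\cSZ$ in the relevant spaces (cf. Lemma~\ref{equivalency-with-heat-kernel}).
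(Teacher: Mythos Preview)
Your spectral-calculus rewriting in Steps 1--2 is correct and is essentially a repackaging of the paper's own reduction: the paper also unwinds $W_u(\phi)*W_u(u)$ into $\int_0^\infty \phi*(D_bu)^**(D_bu)*(D_au)^*\,db/b^{Q+1}$ and recognizes the inner Calder\'on integral. So the formal part of your argument matches the paper's route, just expressed through $\widehat u(\sL)$ rather than through $D_b u$. You also correctly isolate the real issue, namely legitimizing the $b$-integration when $\phi$ is merely in $\cSZd$.

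Where your proposal falls short is in the resolution of that obstacle. Lemma~\ref{exponential estimation theorem} gives only $L^1$ decay $\|\widehat\varphi(r\sL)\psi\|_1\le c\,r^m$; what is actually needed is decay of every Schwartz seminorm $\sup_x(1+|x|)^N|R^\alpha(\,\cdot\,)(x)|$, so that the truncated Calder\'on integral converges in the $\cSZ$-topology and the pairing with $\phi\in\cSZd$ can be passed inside. The paper gets exactly these pointwise weighted bounds from Taylor-remainder estimates exploiting vanishing moments (its inequalities \eqref{eq:4}--\eqref{eq:5}, via Lemma~3.2 of \cite{FuMa}), and then shows that $g_{\epsilon,N}=\int_\epsilon^N v*D_b(u*u^*)\,db/b^{Q+1}$ is Cauchy in $\cSZ$; convolution with $\phi$ commutes with the limit by continuity of the dual pairing. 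Your alternative (b) does not avoid this work: weak-$*$ continuity of the left-hand side in $\phi$ amounts to the statement that $h\mapsto \overline{W_u(u)(h^{-1}(a,x))}\,\pi(h)u$ is Bochner-integrable as a $\cSZ$-valued function, which is precisely the $\cSZ$-convergence that has to be established directly. So the missing ingredient in your sketch is the pointwise-with-polynomial-weight estimate that replaces Lemma~\ref{exponential estimation theorem}; once you supply that (as the paper does via Taylor remainders), your argument goes through.
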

\begin{proof} 
   Since $u,v\in\cSZ$ have vanishing moments
  of all orders, then
  $u*(D_a v)(x) = (u+p_1)*D_a(v+p_2)$ for all polynomials $p_1,p_2$.
  In particular if we let $p_x$ be the right Taylor polynomium
  of homogeneous degree $k$ for $u$ around $x$, then
  \begin{equation*}
    \int u(y)v(a^{-1}y^{-1}x)\,dy
    = \int u(xy)v(a^{-1}y^{-1})\,dy
    = \int (u(xy)-p_x(y)) v(a^{-1}y^{-1})\,dy.
   \end{equation*}
   By the esimate of $u(xy)-p_x(y) $ from Lemma 3.2 in \cite{FuMa} we therefore get
   \begin{align}
     |u*D_av(x)|
     &\leq C(u,k) \int a^{-Q/2}|y|^{k+1} |v(a^{-1}y^{-1})|\,dy \notag\\
     &\leq C(u,k) a^{Q/2} \int |ay|^{k+1} |v(y^{-1})|\,dy \notag\\
    &\leq C(u,v,k,m) a^{Q/2+k+1} (1+|x|)^{-m}. \label{eq:4}
  \end{align}
   A similar inequality is obtained in (10) from \cite{FuMa}
  (note the dilation there is different from $D_a$):
  \begin{equation}\label{eq:5}
    |u*D_a v(x)| \leq C(u,v,k,m) a^{-k-1-Q/2} (1+|x|)^{-m}.
  \end{equation}

  With these facts in place we are ready
  to prove the reproducing formula.
  Using the definition of $\pi$, this is the same as showing that
  for all $\phi\in \cSd$, $a\in\RR^+$ and $x\in G$
  \begin{equation*}
    \int_0^\infty
    \phi*(D_bu)^**(D_bu)*(D_au)^*(x)\, \frac{db}{b^{Q+1}}
    = \phi*(D_au)^*(x).
  \end{equation*}
  All convolutions involving $u$ commute (by definition of $u$)
  and thus we need to show
  \begin{equation*}
    \lim_{\epsilon\to 0,N\to \infty}
    \int_\epsilon^N \phi*(D_au)^**(D_bu)*(D_bu)^*(x)\, \frac{db}{b^{Q+1}}
    = \phi*(D_au)^*(x).
  \end{equation*}
  Since $(D_a u) * (D_a u)^* =  a^{-Q/2} D_a(u*u^*)$ the assertion will
  follow if for all $v\in\cSZ$
  \begin{equation*}
    g_{\epsilon,N}(x)
    :=\int_\epsilon^N v*D_b(u*u^*)\, \frac{db}{b^{Q+1}}
 \end{equation*}
  converges in $\cSZ$
  (by the $L^2$ spectral theory and \eqref{eq:1} the limit is $v$).
  For $g_{\epsilon,N}$ to be Cauchy in $\cSZ$ it suffices to show that
  for all $m$ and $|\alpha|\leq m$ we have
  \begin{equation*}%\label{eq:2}
    \int_0^\infty \sup_{x\in G}
    (1+|x|^m) |R^\alpha (v*D_b(u*u^*))(x)|\,\frac{db}{b^{Q+1}}
    < \infty.
  \end{equation*}
  This can be verified  by choosing $k$ and $m$ large enough in \eqref{eq:4}
  (when $b\leq 1$) and \eqref{eq:5} (when $b\geq 1$).
\end{proof}
\begin{remark}
  The integral \eqref{eq:1} is finite for any $\widehat u\in
  \mathcal S(\RR^+)$ with support away zero. 
  Therefore for the lemma one needs to  normalize 
  $\widehat u$  such that the integral is one. 
\end{remark}

  \subsection{Coorbit description and atomic decompositions
for Besov spaces}

In this section we show that the Besov spaces on stratified Lie groups
can be described via certain Banach space norms of  wavelet transforms (see Section 1).
The Banach spaces of interest are
equivalence classes,
$L_{s}^{p,q}:=L^{p,q}_s(\RR^+\times G)$ for $1\leq p,q\leq \infty$, ~$s\in \RR$,
of measurable functions for which
\begin{equation*} \| f\|_{L^{p,q}_s} = \left( \int_{\RR^+} \left(
\int_G |f(a,x)|^p dx \right)^{q/p}a^{-qs/2} \,\frac{da}{a}
\right)^{1/q}~<\infty~.
\end{equation*}

As a corollary to Lemma ~\ref{equivalency-with-heat-kernel} (b)
 and Theorem~\ref{norm-equi-general} we note:
\begin{corollary}\label{w-inequality}
  Let $1\leq p,q \leq \infty$ and $s\in\RR$, then
  for any vector $u\in \cSZ$ the mapping
  $$\cSZ\ni v\mapsto W_u(v)\in L^{p,q}_s$$
  is continuous.
\end{corollary}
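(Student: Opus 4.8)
The plan is to reduce the corollary to Theorem~\ref{norm-equi-general} by choosing $\widehat\varphi$ appropriately and then estimating the wavelet transform $W_u(v)$ on the group $\RR^+\ltimes G$ in terms of the integral appearing in that theorem. Recall that for $v\in\cSZ$ and $u\in\cSZ$ we have, by the definition of $\pi$ in \eqref{eq:repn} and of the wavelet transform,
\begin{equation*}
W_u(v)(a,x) = \dup{v}{\pi(a,x)u} = \dup{v}{\ell_x D_a u} = (v*(D_au)^*)(x),
\end{equation*}
so that, up to the harmless substitution replacing $(D_au)^*$ by a dilated kernel, $|W_u(v)(a,x)|$ is (pointwise in $x$, for fixed $a$) comparable to $|\widehat u(t\sL)^* v(x)|$ with $t$ a fixed power of $a$; here I use Lemma~\ref{eq-psij} to pass between the dilation $D_a$ of the kernel and the spectral dilation $\widehat u(a^{2}\sL)$ (or $a^{-2}$, depending on convention). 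Concretely, $(D_a u)^*$ is, up to normalization, the distribution kernel of $\overline{\widehat u}(a^{2}\sL)$ by Lemmas~\ref{eq-psij} and~\ref{le_star}. Hence
\begin{equation*}
\|W_u(v)(a,\cdot)\|_p = \|v*(D_au)^*\|_p = \|\widehat u(a^{2}\sL)^* v\|_p = \|\overline{\widehat u}(a^{2}\sL)v\|_p,
\end{equation*}
and therefore
\begin{equation*}
\|W_u(v)\|_{L^{p,q}_s}^q = \int_0^\infty \|\overline{\widehat u}(a^{2}\sL)v\|_p^q\, a^{-qs/2}\,\frac{da}{a}
\end{equation*}
which, after the change of variables $t=a^{2}$ (absorbing constants into the equivalence), is exactly a constant multiple of $\int_0^\infty t^{-sq/2}\|\widehat\varphi(t\sL)v\|_p^q\,\frac{dt}{t}$ with $\widehat\varphi=\overline{\widehat u}$.

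Now, since $u\in\cSZ$ is band-limited away from zero, $\widehat u\in\cS(\RR^+)$ is compactly supported, hence so is $\widehat\varphi=\overline{\widehat u}$; thus Theorem~\ref{norm-equi-general} applies and yields
\begin{equation*}
\left(\int_0^\infty t^{-sq/2}\|\widehat\varphi(t\sL)v\|_p^q\,\frac{dt}{t}\right)^{1/q} \asymp \|v\|_{{\dot B}_{p,q}^s}.
\end{equation*}
Combining the two displays gives $\|W_u(v)\|_{L^{p,q}_s} \asymp \|v\|_{{\dot B}_{p,q}^s}$, and in particular the boundedness $\|W_u(v)\|_{L^{p,q}_s}\preceq \|v\|_{{\dot B}_{p,q}^s}$. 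Finally, by Lemma~\ref{equivalency-with-heat-kernel}(b) the inclusion $\cSZ\hookrightarrow {\dot B}_{p,q}^s$ is continuous, so $\|v\|_{{\dot B}_{p,q}^s}\preceq |v|_N$ for some seminorm defining the topology of $\cSZ$; composing, $v\mapsto W_u(v)$ is continuous from $\cSZ$ (with its Schwartz topology) into $L^{p,q}_s$. The only point requiring care is that $\widehat u$ need not be normalized so that the Calder\'on condition \eqref{eq:1} holds, but this is irrelevant here: Theorem~\ref{norm-equi-general} holds for \emph{any} compactly supported $\widehat\varphi\in\cS(\RR^+)$, so no normalization is needed for the present continuity statement.

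The main obstacle is purely bookkeeping: getting the dilation conventions straight, i.e. reconciling the normalization $D_af(x)=a^{-Q/2}f(a^{-1}x)$ in \eqref{eq:repn} with the spectral dilation $\widehat u(r\sL)$ whose kernel is $r^{-Q/2}u(r^{-1/2}x)$ by Lemma~\ref{eq-psij}, so that one correctly identifies $r$ with $a^{2}$ (or its reciprocal) and tracks the exponent $-qs/2$ through the change of variables. Once the identification $\|W_u(v)(a,\cdot)\|_p = \|\widehat\varphi(a^{2}\sL)v\|_p$ is established, the result is immediate from Theorem~\ref{norm-equi-general} and Lemma~\ref{equivalency-with-heat-kernel}(b).
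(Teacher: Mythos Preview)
Your argument is essentially the one the paper has in mind---the corollary is stated in the paper with no proof beyond the attribution to Lemma~\ref{equivalency-with-heat-kernel}(b) and Theorem~\ref{norm-equi-general}, and you have correctly unwound what that attribution means: identify $W_u(v)(a,\cdot)$ with a spectral multiplier applied to $v$, invoke Theorem~\ref{norm-equi-general} to get a Besov norm, and finish with the continuous embedding $\cSZ\hookrightarrow \dot B^s_{p,q}$. One small bookkeeping slip: $D_a u$ is not exactly the kernel of $\widehat u(a^2\sL)$ but rather $a^{Q/2}$ times it (compare the normalizations in Lemma~\ref{eq-psij} and in the definition of $D_a$); this extra power of $a$ shifts the Besov index but is harmless for the continuity conclusion.

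There is, however, a genuine gap. You write ``since $u\in\cSZ$ is band-limited away from zero, $\widehat u\in\cS(\RR^+)$ is compactly supported,'' but the corollary is stated for an \emph{arbitrary} $u\in\cSZ$, and a general element of $\cSZ$ need not be the distribution kernel of any spectral multiplier $\widehat u(\sL)$ at all (such kernels form a very thin subclass). So the identification $\|W_u(v)(a,\cdot)\|_p=\|\overline{\widehat u}(a^2\sL)v\|_p$ is unavailable in general, and Theorem~\ref{norm-equi-general} cannot be invoked directly. The paper actually needs the general case later (e.g.\ for $W_{\pi(X^\alpha)u}(u)$ in the proof of Theorem~\ref{analyze vector}, where $\pi(X^\alpha)u$ is typically not a spectral kernel). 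One way to close the gap is to use the symmetry $W_\psi(\phi)(g)=\overline{W_\phi(\psi)(g^{-1})}$ for $\phi,\psi\in\cSZ$ (from unitarity of $\pi$) to reduce to the case where the \emph{subscript} is a spectral kernel, at the cost of changing the weight exponent; alternatively, one can bypass Theorem~\ref{norm-equi-general} entirely and use the direct pointwise estimates \eqref{eq:4}--\eqref{eq:5} from the proof of Lemma~\ref{lem:reprformula}, which hold for arbitrary $u,v\in\cSZ$ and immediately give $W_u(v)\in L^{p,q}_s$ with constants depending continuously on the Schwartz seminorms of $v$.
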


The main result of  this paper  follows:

\begin{theorem}\label{analyze vector}
   Let
  $\widehat u\in \cS(\RR^+)$ be compactly
  supported.
  Then $u$ is an analyzing  vector and,   
  for any $1\leq p,q\leq \infty$ and $s\in\RR$, 
     up to norm equivalence we have 
  $$B_{p,q}^{Q-2s/q}(G)=\mathrm{Co}_{\cSZ}^u L^{p,q}_s.$$
  Furthermore, the frames and atomic decompositions from
  Theorem~\ref{thm:7} all apply.
\end{theorem}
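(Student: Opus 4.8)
The plan is to verify the hypotheses of Theorem~\ref{thm:coorbitsduality} and Theorem~\ref{thm:7} for the representation $(\pi,\cSZ)$ of $\RR^+\ltimes G$ and the vector $u$, and then to identify $\Co_{\cSZ}^u L^{p,q}_s$ with the Besov space by unwinding the wavelet transform. First I would normalize $\widehat u$ so that $\int_0^\infty |\widehat u(\lambda)|^2\,d\lambda/\lambda = 1$ (harmless up to scaling, as in the Remark after Lemma~\ref{lem:reprformula}), and check that $u$ is cyclic: if $\dup{\phi}{\pi(a,x)u}=0$ for all $(a,x)$, then $\phi * (D_a u)^* = 0$ for all $a$, and since $\{\widehat u(a\,\cdot\,)\}_{a>0}$ covers $\RR^+$ away from zero this forces $\widehat{u}(t\sL)$-components of $\phi$ to vanish for all $t$, hence $\phi=0$ in $\cSZd$ by the Calder\'on decomposition \eqref{decomp-in-L2}. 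The reproducing identity $W_u(\phi)*W_u(u)=W_u(\phi)$ is exactly Lemma~\ref{lem:reprformula}. The double-continuity condition \eqref{eq-doubleCont} for $B=L^{p,q}_s$ follows from Corollary~\ref{w-inequality}: the pairing $(f,v)\mapsto\int f\cdot W_v(u)^\vee$ factors through $v\mapsto W_v(u)\in L^{p',q'}_{-s}$, which is continuous, and then H\"older on $L^{p,q}_s\times L^{p',q'}_{-s}$ closes it. This gives that $u$ is an analyzing vector and that Theorem~\ref{thm:coorbitsduality} applies.

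Next I would compute the coorbit norm explicitly. For $\phi\in\cSZd$ we have $W_u(\phi)(a,x)=\dup{\phi}{\ell_x D_a u} = \overline{D_a u^*}$-convolution, i.e. $W_u(\phi)(a,x) = (\phi * (D_a u)^*)(x)$, and by Lemma~\ref{eq-psij} the kernel $(D_a u)^*$ is the distribution kernel of $\overline{\widehat u}(a^2\sL)$ up to the homogeneity factor; tracking the $a^{-Q/2}$ normalizations one finds $\|W_u(\phi)(a,\cdot)\|_p = a^{-Q/2}a^{Q/p}\|\widehat{\overline u}(a^2\sL)\phi\|_p$ (the precise power of $a$ being bookkeeping from $D_a$). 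Substituting into the $L^{p,q}_s$ norm and changing variables $t=a^2$ turns $\|W_u(\phi)\|_{L^{p,q}_s}$ into an integral of the form $\bigl(\int_0^\infty t^{-\sigma q/2}\|\widehat\varphi(t\sL)\phi\|_p^q\,dt/t\bigr)^{1/q}$ with $\widehat\varphi=\overline{\widehat u}$ (still $\cS(\RR^+)$ with compact support) and with the smoothness index $\sigma$ related to $s$, $p$, $q$, $Q$ by exactly the shift appearing in the statement, $\sigma = Q - 2s/q$ — this is where the arithmetic of the three normalization exponents ($a^{-Q/2}$ in $D_a$, $a^{Q/p}$ from the $L^p$ dilation, $a^{-s}$ in the $L^{p,q}_s$ weight) must be reconciled. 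Then Theorem~\ref{norm-equi-general} identifies this integral, up to equivalence, with $\|\phi\|_{\dot B^{\sigma}_{p,q}}$, giving $\Co_{\cSZ}^u L^{p,q}_s = \dot B^{Q-2s/q}_{p,q}$ with equivalent norms.

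Finally, for the atomic decomposition claim I would verify the extra hypotheses of Theorem~\ref{thm:7}: that $u\in\cSZ$ is both $\pi$- and $\pi^*$-weakly differentiable (clear, since strongly smooth Schwartz functions are weakly and weak$^*$ differentiable, as noted before Lemma~\ref{lem:repn}, and $\pi^\infty(X)u\in\cSZ$ by Lemma~\ref{lem:repn}), and that the convolution operators $f\mapsto f*W_{\pi(X^\alpha)u}(u)$ and $f\mapsto f*W_u(\pi^*(X^\alpha)u)$ are continuous on $B=L^{p,q}_s$ for $|\alpha|\le\dim(\RR^+\ltimes G)$. Since $\pi(X^\alpha)u$ and $\pi^*(X^\alpha)u$ again lie in $\cSZ$, these kernels are of the form $W_v(w)$ with $v,w\in\cSZ$, and the required continuity is precisely the content of Theorem~\ref{norm-equi-general} applied with the appropriate band-limited function (or a minor variant using the Schur-type estimates of Lemma~\ref{exponential estimation theorem} to bound convolution with $|R^\alpha|$ and $|L^\alpha|$ of the kernel). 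The main obstacle I anticipate is not any single deep step but the careful bookkeeping in the second paragraph: getting the exponent identity $\sigma = Q-2s/q$ right requires matching the dilation normalization in $D_a$, the homogeneity scaling of the sub-Laplacian kernels from Lemma~\ref{eq-psij}, and the weight $a^{-qs/2}$ in $L^{p,q}_s$, and it is easy to be off by a factor; everything else reduces to Theorem~\ref{norm-equi-general}, Lemma~\ref{lem:reprformula}, and Corollary~\ref{w-inequality}.
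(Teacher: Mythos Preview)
Your proposal is correct and follows the same route as the paper: verify Theorem~\ref{thm:coorbitsduality} via Lemma~\ref{lem:reprformula} (reproducing identity), Corollary~\ref{w-inequality} plus H\"older (continuity of the pairing), and cyclicity; identify the coorbit norm with the Besov norm through Theorem~\ref{norm-equi-general}; and then check the smoothness and convolution hypotheses of Theorem~\ref{thm:7} using Lemma~\ref{lem:repn}. The paper's cyclicity argument is slightly different from yours --- it reads $W_u(\phi)\equiv 0$ as $\|\phi\|_{\dot B^s_{p,q}}=0$ via Theorem~\ref{norm-equi-general} and then uses the embedding $\dot B^s_{p,q}\hookrightarrow\cSZd$ --- but your Calder\'on-based version works just as well.

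There is one small imprecision worth flagging. For the convolution-continuity hypothesis of Theorem~\ref{thm:7} you propose to invoke Theorem~\ref{norm-equi-general} ``with the appropriate band-limited function'' or the Schur-type bound of Lemma~\ref{exponential estimation theorem}. Neither applies directly: for $X$ in the $G$-part of the Lie algebra, $\pi(X^\alpha)u$ is a left derivative of $u$ and is generally \emph{not} a function of $\sL$ at all, hence not the kernel of any spectral multiplier, band-limited or otherwise. The paper closes this gap with a Young inequality on the group $\RR^+\ltimes G$,
\[
\|f*g\|_{L^{p,q}_s}\le \|f\|_{L^{p,q}_s}\,\|g\|_{L^{1,1}_{-s}},
\]
and then uses Corollary~\ref{w-inequality} (which you already invoked in the first paragraph) to place $W_{\pi(X^\alpha)u}(u)$ and $W_u(\pi^*(X^\alpha)u)$ in $L^{1,1}_{-s}$; this only needs $\pi(X^\alpha)u,\ \pi^*(X^\alpha)u\in\cSZ$, which you correctly deduced from Lemma~\ref{lem:repn}. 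Once you replace your reference to Theorem~\ref{norm-equi-general} here by this Young\,+\,Corollary~\ref{w-inequality} argument, everything goes through.
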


\begin{proof}
  We  first verify the conditions of Theorem~\ref{thm:coorbitsduality}
  with $S=\cSZ$, $B = L_s^{p,q}(\RR^+\times G)$, 
  and $\pi$ as given in \eqref{eq:repn}.

  By Lemma~\ref{all-vanishing-moments} it is known
  that $u$ is in $\cSZ$.
  Let $\phi\in \cSZd$ such that
  $\langle \pi(a,x)u,\phi\rangle=0$ for
  all  $(a,x)\in \RR^+\times G$. By Theorem \ref{norm-equi-general}
  $\phi\equiv 0$ in $\dot B_{p,q}^s$ for all $1\leq p,q\leq \infty$ and
  $s\in \RR$. The continuous embedding
  $B_{p,q}^s \hookrightarrow \cSZd$  implies
  that $\phi\equiv 0$ in $\cSZd$. Thus $u$ is a cyclic
  vector in $\cSG$.

  The following H\"older inequality and
  Corollary~\ref{w-inequality} imply the continuity of map (\ref{eq-doubleCont})
  in Theorem~\ref{thm:coorbitsduality}:
  $$\|f\ast W_u(u)\|_{L_{s'}^{\infty,\infty}}
  \leq  \|f\|_{L_{s}^{p,q}} \|W_u(u)\|_{L_{-s}^{p',q'}}
  $$
  where $1/p+1/p'=1$ and $1/q+1/q'=1$,  $s'=s-2Q/q'$.

   The multiplier  $\widehat u$  satisfies \eqref{eq:1} (otherwise we normalize $\widehat u$), therefore the reproducing formula
  $W_u(\phi)*W_u(u) =W_u(\phi)$ holds true by Lemma~\ref{lem:reprformula}. Theorem \ref{norm-equi-general} completes the proof of the first part.

  For the last statement we need to verify   that the conditions of
  Theorem~\ref{thm:7} are satisfied.   The invariance of $\cSZ$ under $\pi^\infty$
  (and hence the invariance under weak derivatives) follows from 
  Lemma~\ref{lem:repn}. 
  By Young's inequality derived below
  \begin{align*}
 \|f\ast g\|_{L_s^{p,q}}&=
 \Big\| \int_{\RR^+}\int_G  \tilde g(a,x)  { R_{(a,x) } f}     ~ a^{-(Q+1)}dadx\Big\|_{ L_{s}^{p,q} }\\%\notag
 &\leq  \int_{\RR^+}\int_G   |\tilde g(a,x)|~ \|  { R_{(a,x) } f}  \|_{L_s^{p,q}}~ a^{-(Q+1)}dadx\\%\notag
 &= \|f\|_{L_{s}^{p,q}}\|\tilde g\|_{L_{2Q+s}^{1,1}}\\%\notag
 &= \|f\|_{L_{s}^{p,q}}\|g\|_{L^{1,1}_{-s} }~, 
\end{align*}
and  Corollary~\ref{w-inequality}, for any 
$|\alpha|\leq \dim(\RR^+\ltimes G)$, 
  all convolutions
 \begin{equation*}
  B\ni f\mapsto f*W_{\pi(X^\alpha)u}(u) \in B
  \quad\text{and}\quad
  B\ni f\mapsto f*W_{u}(\pi^*(X^\alpha)u) \in B
\end{equation*}
are continuous and thus the decompositions apply.
\end{proof}

\end{document}